\numberwithin{equation}{section}
\newtheorem{theorem}[equation]{Theorem}
\newtheorem*{theorem*}{Theorem} \newtheorem{lemma}[equation]{Lemma}
\newtheorem*{conjecture*}{Mamma Conjecture}
\newtheorem*{conjecture1*}{Mamma Conjecture (revisited)}
\newtheorem{proposition}[equation]{Proposition}
\newtheorem{corollary}[equation]{Corollary}
\newtheorem*{corollary*}{Corollary}
\theoremstyle{remark}
\newtheorem{definition}[equation]{Definition}
\newtheorem{example}[equation]{Example}
\newtheorem{notation}[equation]{Notation}
\theoremstyle{remark}
\newtheorem{remark}[equation]{Remark}
\newcommand{\cA}{{\mathcal A}}
\newcommand{\cB}{{\mathcal B}}
\newcommand{\cC}{{\mathcal C}}
\newcommand{\cD}{{\mathcal D}}
\newcommand{\cN}{{\mathcal N}}
\newcommand{\cP}{{\mathcal P}}
\newcommand{\cS}{{\mathcal S}}
\newcommand{\cT}{{\mathcal T}}
\newcommand{\bbC}{\mathbb{C}}
\newcommand{\bbF}{\mathbb{F}}
\newcommand{\bbG}{\mathbb{G}}
\newcommand{\bbR}{\mathbb{R}}
\newcommand{\bbQ}{\mathbb{Q}}
\newcommand{\bbZ}{\mathbb{Z}}
\DeclareMathOperator{\NChow}{NChow} 
\DeclareMathOperator{\NNum}{NNum} 
\newcommand{\dgcat}{\mathrm{dgcat}} 
\newcommand{\perf}{\mathrm{perf}}
\newcommand{\dg}{\mathrm{dg}}
\newcommand{\Hom}{\mathrm{Hom}}
\newcommand{\End}{\mathrm{End}}
\newcommand{\rep}{\mathrm{rep}}
\newcommand{\dgHo}{\mathrm{H}^0}
\newcommand{\Hmo}{\mathrm{Hmo}}
\newcommand{\op}{\mathrm{op}}
\newcommand{\too}{\longrightarrow}
\newcommand{\ie}{\textsl{i.e.}\ }
\let\oldmarginpar\marginpar
\def\marginpar#1{\oldmarginpar{\tiny #1}}
\begin{document}

\title[A note on secondary $K$-theory]{A note on secondary $K$-theory}
\author{Gon{\c c}alo~Tabuada}

\address{Gon{\c c}alo Tabuada, Department of Mathematics, MIT, Cambridge, MA 02139, USA}
\email{tabuada@math.mit.edu}
\urladdr{http://math.mit.edu/~tabuada}
\thanks{The author was partially supported by a NSF CAREER Award.}

\date{\today}


\abstract{We prove that To\"en's secondary Grothendieck ring is isomorphic to the Grothendieck ring of smooth proper pretriangulated dg categories previously introduced by Bondal, Larsen and Lunts. Along the way, we show that those short exact sequences of dg categories in which the first term is smooth proper and the second term is proper are necessarily split. As an application, we prove that the canonical map from the derived Brauer group to the secondary Grothendieck ring has the following injectivity properties: in the case of a commutative ring of characteristic zero, it distinguishes between dg Azumaya algebras associated to non-torsion cohomology classes and dg Azumaya algebras associated to torsion cohomology classes (=ordinary Azumaya algebras); in the case of a field of characteristic zero, it is injective; in the case of a field of positive characteristic $p>0$, it restricts to an injective map on the $p$-primary component of the Brauer group.}}

\maketitle
\vskip-\baselineskip
\vskip-\baselineskip



\section{Introduction and statement of results}
A {\em dg category $\cA$}, over a base commutative ring $k$, is a category enriched over complexes of $k$-modules; see \S\ref{sec:dg}. Every (dg) $k$-algebra $A$ gives naturally rise to a dg category with a single object. Another source of examples is provided by schemes since the category of perfect complexes of every quasi-compact quasi-separated $k$-scheme admits a canonical dg enhancement; see Lunts-Orlov \cite{LO}. Following Kontsevich \cite{ENS}, a dg category $\cA$ is called {\em smooth} if it is compact as a bimodule over itself and {\em proper} if the complexes of $k$-modules $\cA(x,y)$ are compact. Examples include the finite dimensional $k$-algebras of finite global dimension (when $k$ is a perfect field) and the dg categories of perfect complexes associated to smooth proper $k$-schemes. Following Bondal-Kapranov \cite{BK}, a dg category $\cA$ is called {\em pretriangulated} if the Yoneda functor $\dgHo(\cA)\to \cD_c(\cA), x \mapsto \widehat{x}$, is an equivalence of categories. As explained in  \S\ref{sec:dg}, every dg category $\cA$ admits a pretriangulated ``envelope'' $\perf_\dg(\cA)$.

Bondal, Larsen, and Lunts introduced in \cite[\S5]{BLL} the {\em Grothendieck ring of smooth proper pretriangulated dg categories $\cP\cT(k)$}. This ring is defined by generators and relations. The generators are the quasi-equivalence classes of smooth proper pretriangulated dg categories\footnote{Bondal, Larsen, and Lunts worked originally with pretriangulated dg categories. In this case the classical Eilenberg's swindle argument implies that the associated Grothendieck ring is trivial. In order to obtain a non-trivial Grothendieck ring, we need to restrict ourselves to smooth proper dg categories; consult \cite[\S7]{Additive} for further details.}. The relations $[\cB]= [\cA] + [\cC] $ arise from the dg categories $\cA, \cC \subseteq \cB$ for which the triangulated subcategories $\dgHo(\cA), \dgHo(\cC) \subseteq \dgHo(\cB)$ are admissible and induce a semi-orthogonal decomposition $\dgHo(\cB)=\langle \dgHo(\cA), \dgHo(\cC)  \rangle$. The multiplication law is given by $\cA \bullet \cB:= \perf_\dg(\cA \otimes^{\bf L} \cB)$, where $-\otimes^{\bf L}-$ stands for the derived tensor product of dg categories. Among other applications, Bondal, Larsen, and Lunts constructed an interesting motivic measure with values in $\cP\cT(k)$. 

More recently, To\"en introduced in \cite[\S5.4]{Toen}\cite{Toen1} a ``categorified'' version of the classical Grothendieck ring named {\em secondary Grothendieck ring $K_0^{(2)}(k)$}. By definition, $K_0^{(2)}(k)$ is the quotient of the free abelian group on the Morita equivalence classes of smooth proper dg categories by the relations $[\cB]= [\cA] + [\cC]$ arising from short exact sequences of dg categories $0 \to \cA \to \cB \to \cC \to 0$. Thanks to the work of Drinfeld \cite[Prop.~1.6.3]{Drinfeld}, the derived tensor product of dg categories endows $K_0^{(2)}(k)$ with a commutative ring structure. Among other applications, the ring $K_0^{(2)}(k)$ was used in the study of derived loop spaces; see \cite{BZ,Toen2,Toen3}.
%
%
\begin{theorem}\label{thm:main}
The rings $\cP \cT(k)$ and $K_0^{(2)}(k)$ are isomorphic. 
\end{theorem}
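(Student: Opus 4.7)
The plan is to construct inverse ring homomorphisms. The map on generators is essentially tautological: a smooth proper pretriangulated dg category is, in particular, a smooth proper dg category, and conversely every Morita equivalence class of smooth proper dg category $\cA$ has the canonical smooth proper pretriangulated representative $\perf_\dg(\cA)$. The multiplication laws agree on the nose since $\perf_\dg(\cA \otimes^{\bf L} \cB)$ is Morita equivalent to $\cA \otimes^{\bf L} \cB$. The entire content of the theorem therefore lies in matching the two sets of relations.

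In the easy direction, every semi-orthogonal decomposition $\dgHo(\cB) = \langle \dgHo(\cA), \dgHo(\cC)\rangle$ of pretriangulated dg categories lifts, via the Drinfeld dg-quotient construction, to a short exact sequence of dg categories $0 \to \cA \to \cB \to \cC \to 0$. Hence every BLL relation automatically produces a To\"en relation, and this yields a well-defined ring homomorphism $\Phi \colon \cP\cT(k) \to K_0^{(2)}(k)$ sending $[\cA]$ to $[\cA]$.

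For the converse direction, I must show that every short exact sequence of smooth proper dg categories $0 \to \cA \to \cB \to \cC \to 0$ gives rise to the BLL relation $[\perf_\dg(\cB)] = [\perf_\dg(\cA)] + [\perf_\dg(\cC)]$. This is precisely the splitting statement announced in the abstract, and it is where essentially all the technical work resides. My plan is to exploit smoothness of $\cA$, which makes the diagonal bimodule $_\cA\cA_\cA$ compact and, in combination with properness, renders $\cA$ self-dual (Serre duality for dg categories). Properness of $\cB$ then ensures that the induced restriction $\cD_c(\cB) \to \cD_c(\cA)$ admits a right adjoint, whose essential image is $\cD_c(\cA)$ embedded as a right-admissible subcategory of $\cD_c(\cB)$. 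This right adjoint provides a section of the short exact sequence in Drinfeld's model and identifies $\cC$ with the right orthogonal of $\cA$ inside $\cB$, so that we obtain a semi-orthogonal decomposition $\dgHo(\perf_\dg(\cB)) = \langle \dgHo(\perf_\dg(\cC)), \dgHo(\perf_\dg(\cA))\rangle$ witnessing the desired BLL relation. The main obstacle here is not the existence of adjoints at the triangulated level, which is standard, but promoting this decomposition to a genuine splitting of dg categories compatible with Drinfeld's framework; this is where smoothness plays a role that is strictly stronger than properness. Once this splitting lemma is in place, the inverse homomorphism $\Psi \colon K_0^{(2)}(k) \to \cP\cT(k)$, $[\cA] \mapsto [\perf_\dg(\cA)]$, is well-defined, and $\Phi$ and $\Psi$ are manifestly mutually inverse ring homomorphisms.
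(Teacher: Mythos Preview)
Your proposal is correct and follows essentially the same route as the paper: both reduce the theorem to the splitting result (the paper's Theorem~\ref{thm:main2}) that every short exact sequence $0 \to \cA \to \cB \to \cC \to 0$ with $\cA$ smooth proper and $\cB$ proper has $\cD_c(\cA) \subseteq \cD_c(\cB)$ admissible. The paper organizes this via an intermediate ring $K_0^{(2)}(k)^s$ built from \emph{split} short exact sequences, shows $\cP\cT(k)\simeq K_0^{(2)}(k)^s$ by direct comparison of relations, and then uses the splitting theorem to conclude $K_0^{(2)}(k)^s = K_0^{(2)}(k)$; your construction of inverse maps $\Phi,\Psi$ amounts to the same thing.

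One correction to your diagnosis of where the work lies. You call the existence of the triangulated-level adjoint ``standard'' and locate the obstacle in ``promoting this decomposition to a genuine splitting of dg categories compatible with Drinfeld's framework''. It is the other way around. The BLL relations are stated purely at the level of $\dgHo$, so once $\cD_c(\cA)$ is admissible in $\cD_c(\cB)$ the BLL relation holds on the nose and no dg-level promotion is needed. The nontrivial step is precisely producing the right (and left) adjoint to the inclusion $\dgHo(\cA)\hookrightarrow\dgHo(\cB)$: the paper does this by a representability argument, using properness of $\cB$ so that each $\cB(-,z)\colon \cA^\op \to \perf_\dg(k)$ lands in perfect complexes, and then dualizability of $\cA$ in $\Hmo(k)$ (your ``self-duality'') to identify $\cA \simeq \rep_\dg(\cA^\op,\perf_\dg(k))$, forcing such a functor to be representable by an object of $\cA$. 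Note also that you need \emph{both} left- and right-admissibility of $\cA$ (so that the orthogonal $\cC$ is itself admissible, as the BLL relation requires); the paper runs the argument twice, once with $\cB(-,z)$ and once with $\cB(z,-)$.
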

The proof of Theorem \ref{thm:main} is based on the fact that those short exact sequences of dg categories $0 \to \cA \to \cB \to \cC \to 0$ in which $\cA$ is smooth proper and $\cB$ is proper are necessarily split; see Theorem \ref{thm:main2}. This result is of independent interest. Intuitively speaking, it shows us that the smooth proper dg categories behave as ``injective'' objects. In the setting of triangulated categories, this idea of ``injectivity'' goes back to the pioneering work of Bondal and Kapranov \cite{BK1}.
\section{Applications}
Following To\"en \cite{Azumaya}, a dg $k$-algebra $A$ is called a {\em dg Azumaya algebra} if the underlying complex of $k$-modules is a compact generator of the derived category $\cD(k)$ and the canonical morphism $A^\op \otimes^{\bf L} A \to {\bf R}\Hom(A,A)$ in $\cD(k)$ is invertible. The ordinary Azumaya algebras (see Grothendieck \cite{Grothendieck}) are the dg Azumaya algebras whose underlying complex is $k$-flat and concentrated in degree zero. For every non-torsion {\'e}tale cohomology class $\alpha \in H^2_{\mathrm{et}}(\mathrm{Spec}(k),\bbG_m)$ there exists a dg Azumaya algebra $A_\alpha$, representing $\alpha$, which is {\em not} Morita equivalent to an ordinary Azumaya algebra; see \cite[page~584]{Azumaya}. Unfortunately, the construction of $A_\alpha$ is highly inexplicit; consult \cite[Appendix B]{Separable} for some properties of these mysterious dg algebras. In the case where $k$ is a field, every dg Azumaya algebra is Morita equivalent to an ordinary Azumaya algebra; see \cite[Prop.~2.12]{Azumaya}. 

The {\em derived Brauer group $\mathrm{dBr}(k)$ of $k$} is the set of Morita equivalence classes of dg Azumaya algebras. The (multiplicative) group structure is induced by the derived tensor product of dg categories and the inverse of $A$ is given by $A^\op$. Since every dg Azumaya algebra is smooth proper, we have a canonical map
\begin{equation}\label{eq:canonical}
\mathrm{dBr}(k) \too K_0^{(2)}(k)\,.
\end{equation}
By analogy with the canonical map from the Picard group to the Grothendieck ring
\begin{equation}\label{eq:canonical22}
\mathrm{Pic}(k) \too K_0(k)\,,
\end{equation}
it is natural to ask\footnote{In the case where $k$ is a field, To{\"e}n asked in \cite[\S5.4]{Toen} if the canonical map \eqref{eq:canonical} is non-zero. This follows now automatically from Theorems \ref{thm:injective2} and \ref{thm:injective3}.} if \eqref{eq:canonical} is injective. Note that, in contrast with \eqref{eq:canonical22}, the canonical map \eqref{eq:canonical} does not seem to admit a ``determinant'' map in the converse direction. 
In this note, making use of Theorem \ref{thm:main} and of the recent theory of noncommutative motives (see \S\ref{sec:ncmotives}), we establish several injectivity properties~of~\eqref{eq:canonical}. 

Recall that $k$ has characteristic zero, resp. positive prime characteristic $p>0$, if the kernel of the unique ring homomorphism $\bbZ \to k$ is $\{0\}$, resp. $p\bbZ$.
\begin{theorem}\label{thm:injective1}
Let $k$ be a noetherian commutative ring of characteristic zero, resp. positive prime characteristic $p>0$, and $A$ a dg Azumaya algebra which is not Morita equivalent to an ordinary Azumaya algebra. If  $K_0(k)_\bbQ\simeq \bbQ$, resp. $K_0(k)_{\bbF_p}\simeq \bbF_p$, then the image of $[A]$ under the canonical map \eqref{eq:canonical} is non-trivial. Moreover, when $k$ is of characteristic zero, resp. positive prime characteristic $p>0$, this non-trivial image is different from the images of the ordinary Azumaya algebras, resp. of the ordinary Azumaya algebras whose index is not a multiple of~$p$.
\end{theorem}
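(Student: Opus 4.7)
The plan is to construct, via the theory of noncommutative motives, a ring homomorphism out of $K_0^{(2)}(k)$ that distinguishes $[A]$ from the classes of the relevant ordinary Azumaya algebras. Writing $F$ for $\bbQ$ in the characteristic zero case and for $\bbF_p$ in the characteristic $p$ case, the hypothesis $K_0(k)_F \simeq F$ will be used to guarantee that $\End_{\NChow(k)_F}(U(k)_F) \simeq F$, so that the unit object of $\NChow(k)_F$ behaves as a ``line object''.

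First, I would invoke Theorem \ref{thm:main} to identify $K_0^{(2)}(k)$ with $\cP\cT(k)$, and then apply the universal additive invariant $U$ valued in $\NChow(k)$. After $F$-linearization, this produces a ring homomorphism
\[
\phi_F \colon K_0^{(2)}(k) \simeq \cP\cT(k) \longrightarrow K_0(\NChow(k)_F), \qquad [\cA] \longmapsto [U(\cA)_F],
\]
and it suffices to establish both non-triviality of $\phi_F([A])$ and its separation from the images of the relevant ordinary Azumaya algebras downstream in $K_0(\NChow(k)_F)$.

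On ordinary Azumaya algebras, the computation is standard: for $B$ of index $n$ with $n$ invertible in $F$ (automatic when $F = \bbQ$; equivalent to $p \nmid n$ when $F = \bbF_p$), a Morita descent argument yields an isomorphism $U(B)_F \simeq U(k)_F$ in $\NChow(k)_F$, so that $\phi_F([B]) = \phi_F([k])$. This accounts for precisely the class of ordinary Azumaya algebras mentioned in the statement.

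Finally, I turn to $\phi_F([A])$. Since $A$ is not Morita equivalent to an ordinary Azumaya algebra, its class in $\dBr(k)$ is non-torsion in the characteristic zero setting and has non-trivial $p$-primary component in the characteristic $p$ setting; in either case the induced map $\dBr(k) \to \Pic(\NChow(k)_F)$ separates $[A]$ from the class of $k$, yielding $U(A)_F \not\simeq U(k)_F$. The hard part will be upgrading this non-isomorphism to an inequality of $K_0$-classes, and this is where the hypothesis $K_0(k)_F \simeq F$ plays its essential role: the scalar endomorphism ring of $U(k)_F$ supplies enough rigidity that invertible objects detect each other at the level of $K_0$ via a rank/trace computation valued in $F$, thereby separating $\phi_F([A])$ from $\phi_F([k])$ and from $\phi_F([B])$ for every ordinary Azumaya algebra $B$ of the specified type.
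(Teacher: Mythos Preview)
Your overall strategy matches the paper's: factor through $K_0(\NChow(k))$ via Theorem~\ref{thm:main}, show $U(B)_F \simeq U(k)_F$ for the relevant ordinary Azumaya algebras $B$, and show $U(A)_F \not\simeq U(k)_F$. The gap is in your last paragraph, where you must promote the non-isomorphism $U(A)_F \not\simeq U(k)_F$ in $\NChow(k)_F$ to an inequality of classes in $K_0$. A ``rank/trace computation'' cannot do this: the categorical trace of the identity of any $\otimes$-invertible object equals that of the unit, so traces (and more generally any ring homomorphism $K_0(\NChow(k)_F)\to F$) will not separate $[U(A)_F]$ from $[U(k)_F]$. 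What you actually need is \emph{cancellation}: from $U(A)_F \oplus N\!M \simeq U(k)_F \oplus N\!M$ deduce $U(A)_F \simeq U(k)_F$. The additive category $\NChow(k)_F$ has no reason to satisfy this.

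The paper's mechanism is to pass further to the category $\NNum(k)_F$ of noncommutative \emph{numerical} motives, the quotient of $\NChow(k)_F$ by the $\otimes$-ideal $\cN$ of numerically trivial morphisms. The hypothesis $K_0(k)_F\simeq F$ is precisely what is needed (via Hochschild homology as a ``Weil'' functor and the Andr\'e--Kahn criterion) to prove that $\NNum(k)_F$ is abelian semi-simple (Theorem~\ref{thm:semi}); semi-simple abelian categories do satisfy cancellation. One must then check separately that the non-isomorphism $U(A)_F \not\simeq U(k)_F$ survives the quotient $\NChow(k)_F \to \NNum(k)_F$. This is where your ``line object'' observation is actually used: since both $U(A)_F$ and $U(k)_F$ are $\otimes$-invertible with endomorphism ring $F$, any morphism between them that becomes invertible in $\NNum(k)_F$ was already invertible in $\NChow(k)_F$. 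So your instinct about the role of $K_0(k)_F\simeq F$ is correct, but the mechanism is cancellation in a semi-simple quotient, not a direct trace argument in $\NChow(k)_F$.
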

As proved by Gabber in \cite[Thm.~II.1]{Gabber}, every torsion {\'e}tale cohomology class $\alpha \in H^2_{\mathrm{et}}(\mathrm{Spec}(k),\bbG_m)_{\mathrm{tor}}$ can be represented by an ordinary Azumaya algebra $A_\alpha$. Therefore, Theorem \ref{thm:injective1} shows us that in some cases the canonical map \eqref{eq:canonical} distinguishes between torsion and non-torsion classes.
\begin{example}
Let $k$ be the noetherian local ring of the singular point of the normal complex algebraic surface constructed by Mumford in \cite[page 16]{Mumford}. As explained by Grothendieck in \cite[page~75]{Grothendieck2}, $k$ is a local $\bbC$-algebra of dimension $2$ whose \'etale cohomology group $H^2_{\mathrm{et}}(\mathrm{Spec}(k),\bbG_m)$ contains non-torsion classes $\alpha$. Therefore, since $K_0(k)\simeq \bbZ$, Theorem \ref{thm:injective1} can be applied to the associated dg Azumaya algebras~$A_\alpha$.
\end{example}
\begin{theorem}\label{thm:injective2}
Let $k$ be a field of characteristic zero. In this case, the canonical map \eqref{eq:canonical} is injective.
\end{theorem}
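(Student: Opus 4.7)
The plan is to factor the canonical map \eqref{eq:canonical} through the Grothendieck ring $K_0(\mathrm{NChow}(k))$ of noncommutative Chow motives, and to establish injectivity there using the structure of the noncommutative motives of Azumaya algebras.

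First, since $k$ is a field, I would invoke \cite[Prop.~2.12]{Azumaya} to replace dg Azumaya algebras by ordinary ones: $\mathrm{dBr}(k) = \mathrm{Br}(k)$. Combined with Theorem \ref{thm:main}, the map \eqref{eq:canonical} then becomes the group homomorphism $\mathrm{Br}(k) \to \cP\cT(k)$, $[A] \mapsto [\perf_\dg(A)]$. The universal additive invariant sends semi-orthogonal decompositions to direct sums, so it descends to a ring homomorphism $\cP\cT(k) \to K_0(\mathrm{NChow}(k))$. It therefore suffices to show that the composite
\[
\mathrm{Br}(k) \longrightarrow K_0(\mathrm{NChow}(k))
\]
is injective.

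Next, I would use the standard formula $\Hom_{\mathrm{NChow}(k)}(U(A), U(B)) = K_0(A^{\mathrm{op}} \otimes B)$, which is a free abelian group of rank one whenever $A, B$ are Azumaya, since $A^{\mathrm{op}} \otimes B$ is then a central simple $k$-algebra. In particular $\End_{\mathrm{NChow}(k)}(U(A)) \simeq \mathbb{Z}$, so each $U(A)$ is indecomposable, and $\mathbb{Z}$ has no nontrivial idempotents so cancellation holds on the subcategory of Azumaya motives. A direct analysis of the composition law in $\mathrm{NChow}(k)$ should then show, in characteristic zero, that an abstract iso $U(A) \simeq U(B)$ lifts to an invertible bimodule, and hence to a Morita (therefore Brauer) equivalence between $A$ and $B$. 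Combining this with indecomposability and cancellation, an equality $[U(A)] = [U(B)]$ in $K_0(\mathrm{NChow}(k))$ forces $U(A) \simeq U(B)$ and hence $[A] = [B]$ in $\mathrm{Br}(k)$.

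The main obstacle I anticipate is the lifting step: passing from an isomorphism in $\mathrm{NChow}(k)$, which only records classes of bimodules in Grothendieck groups, to a genuine invertible bimodule producing a Morita equivalence. This requires showing that if the composition of two bimodule classes equals the identity class, then the bimodules themselves are invertible in the honest (not just derived-$K_0$) sense. It is precisely here that characteristic zero is essential: the indices and ranks of the relevant central simple algebras are unobstructed, while in characteristic $p>0$ the $p$-primary part is exactly what can escape, which is consistent with the weaker injectivity statements of Theorems \ref{thm:injective1} and \ref{thm:injective3}.
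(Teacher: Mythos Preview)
Your factorization through $K_0(\NChow(k))$ is correct and matches the paper. The step ``$U(A)\simeq U(B)$ in $\NChow(k)$ $\Rightarrow$ $[A]=[B]$ in $\Br(k)$'' is also fine, and in fact easier than you suggest: the composition pairing $\Hom(U(A),U(B))\times\Hom(U(B),U(A))\to\End(U(A))\simeq\bbZ$ is $(n,m)\mapsto n\cdot\ind(A^\op\otimes B)^2\cdot m$, so an isomorphism forces $\ind(A^\op\otimes B)=1$ over $\bbZ$, hence $[A]=[B]$. No lifting to invertible bimodules is needed, and characteristic zero plays no role here.

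The genuine gap is the cancellation step. An equality $[U(A)]=[U(B)]$ in $K_0(\NChow(k))$ only gives you $U(A)\oplus N\!M\simeq U(B)\oplus N\!M$ for some \emph{arbitrary} noncommutative Chow motive $N\!M$, not one lying in the additive subcategory generated by Azumaya algebras. Your argument that ``$\End(U(A))\simeq\bbZ$ has no nontrivial idempotents so cancellation holds on the subcategory of Azumaya motives'' does not apply, because $N\!M$ need not belong to that subcategory and $\NChow(k)$ is not known to be Krull--Schmidt. This is precisely where the paper does all the work: it chooses a prime $p\mid\ind(A^\op\otimes B)$, passes to the numerical quotient $\NNum(k)_{\bbF_p}$, and proves a decomposition lemma (Lemma~\ref{lem:key}) saying that $N\!M_{\bbF_p}$ contains only \emph{finitely many} copies of $U(A)_{\bbF_p}$ and $U(B)_{\bbF_p}$ as direct summands. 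That finiteness is where characteristic zero enters, via the finite generation of $\Hom_{\NNum(k)}(U(A),N\!M)$. After splitting off those summands, an orthogonality argument reduces the isomorphism to one inside $\mathrm{CSA}(k)_{\bbF_p}^\oplus$, which is equivalent to $\Br(k)\{p\}$-graded $\bbF_p$-vector spaces and hence has cancellation. So you have correctly located the target category but misidentified both the hard step and the role of the hypothesis.
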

\begin{example}
\begin{itemize}
\item[(i)] When $k$ is the field of real numbers $\bbR$, we have $\mathrm{Br}(\bbR)\simeq \bbZ/2\bbZ$.
\item[(ii)] When $k$ is the field of $p$-adic numbers $\bbQ_p$, we have $\mathrm{Br}(\bbQ_p)\simeq \bbQ/\bbZ$.
\end{itemize}
\end{example}
\begin{theorem}\label{thm:injective3}
Let $k$ be a field of positive characteristic $p>0$ and $A, B$ two central simple $k$-algebras. If $p\mid \mathrm{ind}(A^\op \otimes B)$, where $\mathrm{ind}$ stands for index,  then the images of $[A]$ and $[B]$ under the canonical map \eqref{eq:canonical} are different. This holds in particular when $\mathrm{ind}(A)$ and $\mathrm{ind}(B)$ are coprime\footnote{When $\mathrm{ind}(A)$ and $\mathrm{ind}(B)$ are coprime we have $\mathrm{ind}(A^\op \otimes B)=\mathrm{ind}(A)\cdot \mathrm{ind}(B)$.} and $p$ divides $\mathrm{ind}(A)$ or $\mathrm{ind}(B)$.
\end{theorem}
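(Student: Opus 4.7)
The plan is to combine Theorem \ref{thm:main} with a $p$-primary obstruction drawn from the theory of noncommutative motives.

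First I would reduce to a single algebra. For every central simple $k$-algebra $A$ the canonical isomorphism $A^{\op}\otimes_k A\cong \End_k(A)$, together with Morita invariance, yields $[A]\cdot [A^{\op}]=[k]$ in $K_0^{(2)}(k)$. Multiplying the assumed equality $[A]=[B]$ by $[A^{\op}]$ therefore gives $[C]=[k]$ in $K_0^{(2)}(k)$, where $C:=A^{\op}\otimes_k B$ is a central simple $k$-algebra with $\mathrm{ind}(C)=\mathrm{ind}(A^{\op}\otimes B)$. It thus suffices to prove that whenever $p\mid\mathrm{ind}(C)$, the classes $[C]$ and $[k]$ are distinct in $K_0^{(2)}(k)$.

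Next, via Theorem \ref{thm:main} and the universal additive invariant $U$ from smooth proper dg categories to $\NChow(k)$---which sends semi-orthogonal decompositions to direct sums and is symmetric monoidal---we obtain a ring homomorphism $\Phi_{\bbF_p}\colon K_0^{(2)}(k)\to K_0(\NChow(k)_{\bbF_p})$. It suffices to show that $[U(C)]\neq [U(k)]$ in the target. In $\NChow(k)_{\bbF_p}$ one has $\Hom(U(k),U(C))=K_0(C)_{\bbF_p}\cong \bbF_p$ and $\Hom(U(C),U(k))=K_0(C^{\op})_{\bbF_p}\cong \bbF_p$, both generated by the class of the simple $C$-module. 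A direct Morita computation (writing $C=M_r(D)$ and using $D^r\otimes_{M_r(D)}D^r\cong D$) shows that every composite $U(k)\to U(C)\to U(k)$ is an integer multiple of $\mathrm{ind}(C)^2$ in $\End(U(k))=K_0(k)=\bbZ$, and hence vanishes in $\bbF_p$ whenever $p\mid\mathrm{ind}(C)$. In particular $\mathrm{id}_{U(k)}$ cannot factor through $U(C)$, so $U(k)\not\simeq U(C)$ in $\NChow(k)_{\bbF_p}$.

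Finally, to upgrade this non-isomorphism to an inequality of $K_0$-classes, I would use that the endomorphism rings $\End_{\NChow(k)_{\bbF_p}}(U(k))=\bbF_p$ and $\End_{\NChow(k)_{\bbF_p}}(U(C))=K_0(C^{\op}\otimes C)_{\bbF_p}=K_0(\End_k(C))_{\bbF_p}=\bbF_p$ are both local, so that $U(k)$ and $U(C)$ are indecomposable objects in the Karoubian additive $\bbF_p$-linear category $\NChow(k)_{\bbF_p}$; a Krull--Schmidt argument then separates their $K_0$-classes. The main obstacle I anticipate lies in this last step: one must rule out any motivic identification of these two non-isomorphic indecomposables in $K_0$, which one handles by restricting attention to the Karoubi-closed additive subcategory generated by separable $k$-algebras, where unique decomposition is available by standard arguments.
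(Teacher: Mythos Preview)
Your overall strategy---factoring the canonical map through $K_0(\NChow(k)_{\bbF_p})$ and using the index computation to show that the composite $U(k)\to U(C)\to U(k)$ is divisible by $\mathrm{ind}(C)^2$---matches the paper's Proposition~\ref{prop:index} and the ring homomorphism preceding Theorem~\ref{thm:injective11}. The reduction from the pair $(A,B)$ to $(k,C)$ with $C=A^\op\otimes B$ is a harmless simplification; the paper works directly with $A$ and $B$, but the computation is the same.

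The genuine gap is in your final step. Knowing that $U(k)$ and $U(C)$ are non-isomorphic indecomposables with local endomorphism rings does \emph{not} by itself separate their classes in $K_0(\NChow(k)_{\bbF_p})$: an equality $[U(k)]=[U(C)]$ means there exists some $N\!M\in\NChow(k)_{\bbF_p}$ with $U(k)\oplus N\!M\simeq U(C)\oplus N\!M$, and this $N\!M$ is an \emph{arbitrary} noncommutative Chow motive. Your proposed fix---restricting to the Karoubi-closed additive subcategory generated by separable algebras---does not address this, because $N\!M$ has no reason to lie in that subcategory. To run Krull--Schmidt you would need it for all of $\NChow(k)_{\bbF_p}$, which would require (for instance) that all Hom-groups $K_0(\cA^\op\otimes^{\bf L}\cB)\otimes_\bbZ\bbF_p$ are finite-dimensional; this is not established.

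The paper circumvents this by passing one step further, to the category $\NNum(k)_{\bbF_p}$ of noncommutative \emph{numerical} motives. Since $k$ is a field of characteristic $p$ one has $K_0(k)_{\bbF_p}\simeq\bbF_p$, and Theorem~\ref{thm:semi} then shows that $\NNum(k)_{\bbF_p}$ is abelian semi-simple; in particular it enjoys cancellation for direct sums with no finiteness hypotheses on $N\!M$. One checks (as in implication~\eqref{eq:implication}) that $U(k)_{\bbF_p}\not\simeq U(C)_{\bbF_p}$ persists in $\NNum(k)_{\bbF_p}$, and then the stable isomorphism $U(k)\oplus N\!M\simeq U(C)\oplus N\!M$ would force $U(k)_{\bbF_p}\simeq U(C)_{\bbF_p}$ there, a contradiction. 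This passage to the semi-simple quotient is the missing idea in your argument.
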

\begin{corollary}\label{cor:main}
When $k$ is a field of positive characteristic $p>0$, the restriction of the canonical map \eqref{eq:canonical} to the $p$-primary torsion subgroup $\mathrm{Br}(k)\{p\}$ is injective. Moreover, the image of $\mathrm{Br}(k)\{p\}-0$ is disjoint from the image of $\bigoplus_{q \neq p}\mathrm{Br}(k)\{q\}$.
\end{corollary}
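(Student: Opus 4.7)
The plan is to derive Corollary \ref{cor:main} directly from Theorem \ref{thm:injective3} together with a small amount of elementary Brauer-group arithmetic; the only classical input needed is the well-known fact that for every central simple $k$-algebra $D$ the period and the index share the same prime divisors.

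For the first claim, let $A$ and $B$ be central simple $k$-algebras whose Brauer classes lie in $\mathrm{Br}(k)\{p\}$ and satisfy $[A]\neq [B]$. Then $[A^{\op}\otimes B]=[B]-[A]$ is a nonzero $p$-primary element, so its period in the Brauer group is a nontrivial power of $p$. By the period-index principle, $\mathrm{ind}(A^{\op}\otimes B)$ is then a nontrivial power of $p$; in particular $p\mid\mathrm{ind}(A^{\op}\otimes B)$, and Theorem \ref{thm:injective3} guarantees that the images of $[A]$ and $[B]$ under \eqref{eq:canonical} are distinct.

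For the second claim, pick $[A]\in\mathrm{Br}(k)\{p\}-0$ of period $p^{n}$ with $n\geq 1$, and $[B]\in\bigoplus_{q\neq p}\mathrm{Br}(k)\{q\}$ of period $m$ with $\gcd(m,p)=1$. In any abelian group, two torsion elements of coprime orders have a sum of order equal to the product of their orders, so $[A^{\op}\otimes B]=[B]-[A]$ has period $p^{n}\cdot m$. This period is divisible by $p$, and the period-index principle again forces $p\mid\mathrm{ind}(A^{\op}\otimes B)$, so Theorem \ref{thm:injective3} yields the required inequality of images in $K_0^{(2)}(k)$.

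I do not anticipate any serious obstacle: both parts are routine translations once Theorem \ref{thm:injective3} is in hand, the only external ingredient being the classical period-index compatibility for central simple algebras.
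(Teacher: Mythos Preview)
Your proposal is correct and follows essentially the same approach as the paper: both arguments reduce to Theorem~\ref{thm:injective3} via the period--index principle (that the period and index of a central simple algebra share the same prime factors), the only cosmetic difference being that the paper phrases the first claim through the divisibility $\mathrm{ind}(A^{\op}\otimes B)\mid\mathrm{ind}(A)\cdot\mathrm{ind}(B)$ rather than through the period of $A^{\op}\otimes B$ directly.
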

\begin{proof}
The index and the period of a central simple algebra have the same prime factors. Therefore, the proof of the first claim follows from the divisibility relation $\mathrm{ind}(A^\op \otimes B) \mid \mathrm{ind}(A)\cdot \mathrm{ind}(B)$. The proof of the second claim is now clear.
\end{proof}
 \begin{example}
Let $k$ be a field of characteristic $p>0$. Given a character $\chi$ and an invertible element $b \in k^\times$, the associated cyclic algebra $(\chi,b)$ belongs to the $p$-primary torsion subgroup $\mathrm{Br}(k)\{p\}$. Moreover, thanks to the work of Albert (see \cite[Thm.~9.1.8]{Gille}), every element of $\mathrm{Br}(k)\{p\}$ is of this form. Making use of Corollary \ref{cor:main}, we hence conclude that the canonical map \eqref{eq:canonical} distinguishes all these cyclic algebras. Furthermore, the image of $\mathrm{Br}(k)\{p\}-0$ in the secondary Grothendieck ring $K_0^{(2)}(k)$ is disjoint from the image of $\bigoplus_{q \neq p}\mathrm{Br}(k)\{q\}$.

\end{example} 
Every ring homomorphism $k \to k'$ gives rise to the following commutative square
\begin{equation*}
\xymatrix{
\mathrm{dBr}(k) \ar[d]_-{-\otimes^{\bf L}_k k'} \ar[r]^-{\eqref{eq:canonical}} & K_0^{(2)}(k) \ar[d]^-{-\otimes^{\bf L}_k k'} \\
\mathrm{dBr}(k') \ar[r]_-{\eqref{eq:canonical}} & K_0^{(2)}(k')\,.
}
\end{equation*}
By combining it with Theorems \ref{thm:injective2} and \ref{thm:injective3}, we hence obtain the following result:
\begin{corollary}\label{cor:reduction}
Let $A$ and $B$ be dg Azumaya $k$-algebras. If there exists a ring homomorphism $k \to k'$, with $k'$ a field of characteristic zero, resp. positive characteristic $p>0$, such that $[A\otimes^{\bf L}_k k']\neq [B\otimes^{\bf L}_k k']$ in $\mathrm{Br}(k')$, resp. $p\mid \mathrm{ind}((A^\op \otimes^{\bf L}B)\otimes^{\bf L}_k k')$, then the images of $[A]$ and $[B]$ under the canonical map \eqref{eq:canonical} are different.
\end{corollary}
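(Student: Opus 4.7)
The plan is to argue by contrapositive using the commutative square displayed immediately before the statement, together with Theorems \ref{thm:injective2} and \ref{thm:injective3}. Suppose that the images of $[A]$ and $[B]$ under \eqref{eq:canonical} coincide in $K_0^{(2)}(k)$. Given the ring homomorphism $k \to k'$, the commutativity of the square forces the images of $[A \otimes_k^{\bf L} k']$ and $[B \otimes_k^{\bf L} k']$ under the corresponding map $\mathrm{dBr}(k') \to K_0^{(2)}(k')$ to agree. The rest of the argument is merely to recognize a contradiction with the hypothesis in each of the two cases.

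In the characteristic zero case I would invoke Theorem \ref{thm:injective2} directly: injectivity of \eqref{eq:canonical} over $k'$ yields $[A\otimes^{\bf L}_k k'] = [B\otimes^{\bf L}_k k']$ in $\mathrm{dBr}(k')$. Since $k'$ is a field, every dg Azumaya algebra is Morita equivalent to an ordinary Azumaya algebra (\cite[Prop.~2.12]{Azumaya}), so this equality takes place already in $\mathrm{Br}(k')$, contradicting the hypothesis.

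In the positive characteristic case I would again apply \cite[Prop.~2.12]{Azumaya} to replace $A\otimes_k^{\bf L} k'$ and $B\otimes_k^{\bf L} k'$ (up to Morita equivalence) by central simple $k'$-algebras $A'$ and $B'$. Their opposite-tensor $A'^{\op}\otimes B'$ is Morita equivalent to $(A^\op \otimes^{\bf L} B)\otimes_k^{\bf L} k'$, hence shares the same index. The contrapositive of Theorem \ref{thm:injective3} then reads: if the images of $[A']$ and $[B']$ in $K_0^{(2)}(k')$ are equal, then $p \nmid \mathrm{ind}(A'^{\op}\otimes B')$. This contradicts the hypothesis $p \mid \mathrm{ind}((A^\op\otimes^{\bf L} B)\otimes_k^{\bf L} k')$, completing the argument.

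No genuine obstacle is expected, since the statement is a formal diagram chase built on top of two substantive results already established. The only routine check is that the construction of \eqref{eq:canonical} is functorial in the base ring, which is precisely what is encoded by the commutative square preceding the corollary; this in turn reduces to the compatibility of derived base change $-\otimes^{\bf L}_k k'$ with both dg Azumaya algebras and with short exact sequences of smooth proper dg categories.
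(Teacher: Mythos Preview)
Your proposal is correct and matches the paper's approach exactly: the paper does not even give a separate proof, stating only that the corollary follows ``by combining [the commutative square] with Theorems \ref{thm:injective2} and \ref{thm:injective3}''. Your contrapositive argument, together with the reduction to ordinary central simple algebras via \cite[Prop.~2.12]{Azumaya}, simply spells out this implicit diagram chase.
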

\begin{example}[Local rings]\label{ex:local}
Let $k$ be a complete local ring with residue field $k'$ of characteristic zero, resp. positive characteristic $p>0$. As proved by Auslander-Goldman in \cite[Thm.~6.5]{AG}, the assignment $A \mapsto A \otimes^{\bf L}_k k'$ gives rise to a group isomorphism $\mathrm{Br}(k) \simeq \mathrm{Br}(k')$. Therefore, by combining Corollary \ref{cor:reduction} with Theorem \ref{thm:injective2}, resp. Corollary \ref{cor:main}, we conclude that the restriction of the canonical map \eqref{eq:canonical} to the subgroup $\mathrm{Br}(k)\subset \mathrm{dBr}(k)$, resp. $\mathrm{Br}(k)\{p\}\subset \mathrm{dBr}(k)$, is injective.
\end{example}
\begin{example}[Domains]
Let $k$ be a regular noetherian domain of characteristic zero, resp. positive prime characteristic $p>0$, with field of fractions $k'$. Since $H^1_{\mathrm{et}}(\mathrm{Spec}(k),\bbZ)=0$ and all {\'e}tale cohomology classes of $H^2_{\mathrm{et}}(\mathrm{Spec}(k),\bbG_m)$ are torsion (see \cite[Prop.~1.4]{Grothendieck2}), Gabber's result \cite[Thm.~II.1]{Gabber} implies that the derived Brauer group $\mathrm{dBr}(k)\simeq H^1_{\mathrm{et}}(\mathrm{Spec}(k),\bbZ) \times H^2_{\mathrm{et}}(\mathrm{Spec}(k),\bbG_m)$ agrees with $\mathrm{Br}(k)$. As proved by Auslander-Goldman in \cite[Thm.~7.2]{AG}, the assignment $A \mapsto A \otimes^{\bf L}_k k'$ gives rise to an injective group homomorphism $\mathrm{Br}(k) \to \mathrm{Br}(k')$. Therefore, by combining Corollary \ref{cor:reduction} with Theorem \ref{thm:injective2}, resp. Corollary \ref{cor:main}, we conclude that the canonical map \eqref{eq:canonical}, resp. the restriction of \eqref{eq:canonical} to $\mathrm{Br}(k)\{p\}\subset \mathrm{Br}(k)$, is injective.
\end{example}
\begin{example}[Weyl algebras]
Let $F$ be a field of positive characteristic $p>0$. Thanks to the work of Revoy \cite{Revoy}, the classical Weyl algebra $W_n(F), n \geq 1$, defined as the quotient of $F\langle x_1, \ldots, x_n, \partial_1, \ldots, \partial_n\rangle$ by the relations $[\partial_i, x_j]=\delta_{ij}$, can be considered as an (ordinary) Azumaya algebra over the ring of polynomials $k:=F[x_1^p, \ldots, x_n^p, \partial_1^p, \ldots, \partial_n^p], n\geq 1$. Consider the following composition
$$k:=F[x_1^p, \ldots, x_n^p, \partial_1^p, \ldots, \partial_n^p] \too F[x_1^p, \partial_1^p] \too \mathrm{Frac}(F[x_1^p, \partial_1^p])=:k'\,,$$
where the first homomorphism sends $x^p_i, \partial^p_i, i >1$, to zero and $\mathrm{Frac}(F[x_1^p, \partial_1^p])$ denotes the field of fractions of the integral domain $F[x_1^p, \partial_1^p]$. As explained by Wodzicki in \cite[\S4]{Wodzicki}, we have $\mathrm{ind}(W_n(F)\otimes^{\bf L}_k k')=p$. Therefore, thanks to Corollary \ref{cor:reduction}, we conclude that the image of $W_n(F)$ under \eqref{eq:canonical} is non-trivial.
\end{example}
\begin{example}[Algebras of $p$-symbols]
Let $F$ be a field of positive characteristic $p>0$, $k:=F[x_1^p,\partial_1^p]$ the algebra of polynomials, and $k':=\mathrm{Frac}(F[x_1^p,\partial_1^p])$ the field of fractions. Following Wodzicki \cite[\S1]{Wodzicki}, given elements $a,b \in k$, let us denote by $\cS_{ab}(k) \in {}_p\mathrm{Br}(k)$ the associated (ordinary) Azumaya $k$-algebra of $p$-symbols. For example, when $a=x_1^p$ and $b=\partial_1^p$, we have $\cS_{ab}(k)=W_1(F)$. As proved by Wodzicki in \cite[\S6]{Wodzicki}, we have $\mathrm{ind}(\cS_{ab}(k)\otimes^{\bf L}_k k')=p$ if and only if
\begin{equation}\label{eq:relations}
b\neq c_0^p+ c_1^pa + \cdots + c_{p-1}^p a^{p-1} - c_{p-1} \,\,\mathrm{for}\,\,\mathrm{every}\,\, c_0 + c_1 t + \cdots + c_{p-1} t^{p-1} \in k'[t]\,.
\end{equation}
Therefore, thanks to Corollary \ref{cor:reduction}, we conclude that whenever $a$ and $b$ satisfy condition \eqref{eq:relations} the image of $\cS_{ab}(k)$ under the canonical map \eqref{eq:canonical} is non-trivial.
\end{example}
\begin{remark}[Stronger results]As explained in \S\ref{sec:proof2}, Theorems \ref{thm:injective1}, \ref{thm:injective2} and \ref{thm:injective3}, and Corollaries \ref{cor:main} and \ref{cor:reduction}, follow from stronger analogue results where instead of $K_0^{(2)}(k)$ we consider the Grothendieck ring of the category of noncommutative Chow motives; consult Theorems \ref{thm:injective11}, \ref{thm:injective22} and \ref{thm:injective33}, Corollary \ref{cor:main2}, and Remark~\ref{rk:extension-last}.
\end{remark}
\section{Background on DG categories}\label{sec:dg}
Let $(\cC(k),\otimes, k)$ be the symmetric monoidal category of cochain complexes of $k$-modules. A {\em dg category $\cA$} is a category enriched over $\cC(k)$ and a {\em dg functor} $F\colon\cA\to \cB$ is a functor enriched over $\cC(k)$; consult Keller's ICM survey \cite{ICM-Keller}. Let us denote by $\dgcat(k)$ the category of (small) dg categories and dg functors.

Let $\cA$ be a dg category. The opposite dg category $\cA^\op$ has the same objects and $\cA^\op(x,y):=\cA(y,x)$. The category $\dgHo(\cA)$ has the same objects as $\cA$ and morphisms $\dgHo(\cA)(x,y):=H^0(\cA(x,y))$, where $H^0(-)$ stands for $0^{\mathrm{th}}$-cohomology.

A {\em right dg $\cA$-module} is a dg functor $M\colon\cA^\op \to \cC_\dg(k)$ with values in the dg category $\cC_\dg(k)$ of complexes of $k$-modules. Given $x \in \cA$, let us write $\widehat{x}$ for the Yoneda right dg $\cA$-module defined by $y \mapsto \cA(x,y)$. Let $\cC(\cA)$ be the category of right dg $\cA$-modules. As explained in \cite[\S3.2]{ICM-Keller}, $\cC(\cA)$ carries a Quillen model structure whose weak equivalences, resp. fibrations, are the objectwise quasi-isomorphisms, resp. surjections. The {\em derived category $\cD(\cA)$ of $\cA$} is the associated homotopy category. Let $\cD_c(\cA)$ be the full triangulated subcategory of compact objects. The dg structure of $\cC_\dg(k)$ makes $\cC(\cA)$ naturally into a dg category $\cC_\dg(\cA)$. Let us write $\perf_\dg(\cA)$ for the full dg subcategory of $\cC_\dg(\cA)$ consisting of those cofibrant right dg $\cA$-modules which belong to $\cD_c(\cA)$. Note that we have the Yoneda dg functor $\cA \to \perf_\dg(\cA) \subset \cC_\dg(\cA), x \mapsto \widehat{x}$, and that $\dgHo(\perf_\dg(\cA)) \simeq \cD_c(\cA)$. 

A dg functor $F\colon\cA\to \cB$ is called a {\em quasi-equivalence} if the morphisms of $k$-modules $F(x,y)\colon\cA(x,y) \to \cB(F(x),F(y))$ are quasi-isomorphisms and the induced functor $\dgHo(F)\colon \dgHo(\cA) \to \dgHo(\cB)$ is an equivalence of categories. More generally, $F$ is called a {\em Morita equivalence} if it induces an equivalence of derived categories $\cD(\cA)\to \cD(\cB)$; see \cite[\S4.6]{ICM-Keller}. As proved in \cite[Thm.~5.3]{Additive}, $\dgcat(k)$ carries a Quillen model structure whose weak equivalences are the Morita equivalences. Let us denote by $\Hmo(k)$ the associated homotopy category. 

The tensor product $\cA\otimes\cB$ of two dg categories $\cA$ and $\cB$ is defined as follows: the set of objects is the cartesian product and $(\cA\otimes\cB)((x,w),(y,z)):= \cA(x,y) \otimes \cB(w,z)$. As explained in \cite[\S4.3]{ICM-Keller}, this construction can be derived $-\otimes^{\bf L} -$ giving thus rise to a symmetric monoidal structure on $\Hmo(k)$ with $\otimes$-unit the dg category $k$.

\section{Proof of Theorem \ref{thm:main}}
The smooth proper dg categories can be characterized as the dualizable objects of the symmetric monoidal category $\Hmo(k)$; see \cite[\S5]{CT1}. Consequently, Kontsevich's notions of smoothness and properness are invariant under Morita equivalence. 

Recall from \cite[\S4.6]{ICM-Keller} that a short exact sequence of dg categories is a sequence of morphisms $\cA \to \cB \to \cC$ in the homotopy category $\Hmo(k)$ inducing an exact sequence of triangulated categories $0 \to \cD_c(\cA) \to \cD_c(\cB) \to \cD_c(\cC) \to 0$ in the sense of Verdier. As proved in \cite[Lem.~10.3]{Duke}, the morphism $\cA \to \cB$ is isomorphic to an inclusion of dg categories $\cA \subseteq \cB$ and $\cC$ identifies with Drinfeld's DG quotient $\cB/\cA$. 
\begin{definition}
A short exact sequence of dg categories $0 \to \cA \to \cB \to \cC \to 0$ is called {\em split} if the triangulated subcategory $\cD_c(\cA) \subseteq \cD_c(\cB)$ is admissible.
\end{definition}
\begin{remark}\label{rk:orthogonal}
In the case of a split short exact sequence of dg categories we have an induced equivalence between $\cD_c(\cC)$ and the right orthogonal $\cD_c(\cA)^\perp \subseteq \cD_c(\cB)$. Consequently, we obtain a semi-orthogonal decomposition $\cD_c(\cB) = \langle \cD_c(\cA), \cD_c(\cC)\rangle$. 
\end{remark}
Let us write $K_0^{(2)}(k)^s$ for the ring defined similarly to $K_0^{(2)}(k)$ but with {\em split} short exact sequences of dg categories instead of short exact sequences of dg categories.
\begin{proposition}\label{prop:main1}
The rings $\cP\cT(k)$ and $K_0^{(2)}(k)^s$ are isomorphic.
\end{proposition}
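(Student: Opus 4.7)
The plan is to build mutually inverse ring homomorphisms between $\cP\cT(k)$ and $K_0^{(2)}(k)^s$, exploiting the fact that Morita equivalence classes of smooth proper dg categories correspond bijectively to quasi-equivalence classes of smooth proper pretriangulated dg categories via $\cA \mapsto \perf_\dg(\cA)$, and that Remark \ref{rk:orthogonal} exactly matches the two kinds of relations.

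First I would define $\phi\colon \cP\cT(k) \to K_0^{(2)}(k)^s$ by sending the quasi-equivalence class of a smooth proper pretriangulated dg category $\cB$ to its Morita equivalence class (smoothness and properness are Morita invariant, by the dualizability characterization recalled at the start of \S4). For well-definedness on relations, suppose $\cA,\cC\subseteq \cB$ induce an admissible semi-orthogonal decomposition $\dgHo(\cB)=\langle \dgHo(\cA),\dgHo(\cC)\rangle$. Because $\cB$ is pretriangulated, $\dgHo(\cB)\simeq \cD_c(\cB)$, so $\cD_c(\cA)\subseteq \cD_c(\cB)$ is admissible. This yields a split short exact sequence $0\to \cA \to \cB \to \cB/\cA \to 0$ in $\Hmo(k)$; moreover $\cD_c(\cB/\cA)\simeq \cD_c(\cA)^\perp = \cD_c(\cC)$, so $\cB/\cA$ is Morita equivalent to $\cC$. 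Hence $[\cB]=[\cA]+[\cC]$ also holds in $K_0^{(2)}(k)^s$.

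Next I would define $\psi\colon K_0^{(2)}(k)^s \to \cP\cT(k)$ by $[\cA]\mapsto [\perf_\dg(\cA)]$; this is well-defined on Morita equivalence classes since Morita equivalent dg categories have quasi-equivalent pretriangulated envelopes. Given a split short exact sequence $0\to \cA \to \cB \to \cC \to 0$ of smooth proper dg categories (which, by \cite[Lem.~10.3]{Duke}, we may take to be a genuine inclusion with $\cC=\cB/\cA$), Remark \ref{rk:orthogonal} provides an admissible semi-orthogonal decomposition $\cD_c(\cB)=\langle \cD_c(\cA),\cD_c(\cC)\rangle$. Under the identification $\dgHo(\perf_\dg(-))\simeq \cD_c(-)$, the induced extension dg functors $\perf_\dg(\cA)\to \perf_\dg(\cB)$ and $\perf_\dg(\cC)\to \perf_\dg(\cB)$ realize $\perf_\dg(\cA)$ and $\perf_\dg(\cC)$ as the desired admissible dg subcategories of $\perf_\dg(\cB)$ (replacing $\perf_\dg(\cB)$ by a quasi-equivalent model to turn these into honest inclusions). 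Thus $[\perf_\dg(\cB)]=[\perf_\dg(\cA)]+[\perf_\dg(\cC)]$ in $\cP\cT(k)$.

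Finally, both $\phi$ and $\psi$ respect the multiplicative structure because in both rings the product is $\cA\bullet \cB = \perf_\dg(\cA\otimes^{\bf L}\cB)$. They are inverse to each other: $\psi\phi[\cB]=[\perf_\dg(\cB)]=[\cB]$ because the Yoneda dg functor $\cB\to \perf_\dg(\cB)$ is a quasi-equivalence for $\cB$ already pretriangulated, while $\phi\psi[\cA]=[\perf_\dg(\cA)]=[\cA]$ because $\cA$ and $\perf_\dg(\cA)$ are Morita equivalent. I expect the main technical obstacle to be the bookkeeping in the well-definedness of $\psi$, namely rectifying the extension-of-scalars dg functors on $\perf_\dg$ into genuine dg-subcategory inclusions of a single pretriangulated model of $\perf_\dg(\cB)$ that simultaneously witness both admissible pieces of the semi-orthogonal decomposition in the precise sense required by the definition of $\cP\cT(k)$.
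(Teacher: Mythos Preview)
Your proposal is correct and follows essentially the same route as the paper: both exploit the bijection between Morita classes of smooth proper dg categories and quasi-equivalence classes of smooth proper pretriangulated ones, and both reduce the matching of relations to Remark~\ref{rk:orthogonal}. The only organizational difference is that the paper builds a single map $\cP\cT(k)\to K_0^{(2)}(k)^s$ and checks it is a bijective ring homomorphism, whereas you build explicit inverse maps $\phi,\psi$.

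One remark on the technical obstacle you flag: the paper sidesteps the ``rectification into honest inclusions'' issue entirely. Rather than applying $\perf_\dg(-)$ to a split short exact sequence and then wrestling the extension functors into genuine subcategory inclusions, the paper first replaces the terms of the sequence by pretriangulated representatives (which is harmless in $\Hmo(k)$), so that $\cA\subseteq\cB$ is already a dg subcategory with $\dgHo(\cA)\subseteq\dgHo(\cB)$ admissible, and then takes $\cC'\subseteq\cB$ to be the full dg subcategory on the objects of $\dgHo(\cA)^\perp$. This $\cC'$ is pretriangulated, quasi-equivalent to $\cC$, and an honest dg subcategory of $\cB$, so the relation $[\cB]=[\cA]+[\cC']=[\cA]+[\cC]$ holds in $\cP\cT(k)$ without any model replacement. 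You may find this cleaner than the rectification step you anticipated.
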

\begin{proof}
The assignment $\cA \mapsto \cA$ clearly sends quasi-equivalence classes of smooth proper pretriangulated dg categories to Morita equivalence classes of smooth proper dg categories. Let $\cA, \cC \subseteq \cB$ be smooth proper pretriangulated dg categories for which the triangulated subcategories $\dgHo(\cA), \dgHo(\cC) \subseteq \dgHo(\cB)$ are admissible and induce a semi-orthogonal decomposition $\dgHo(\cB) = \langle \dgHo(\cA), \dgHo(\cC)\rangle$. Consider the full dg subcategory $\cB'$ of $\cB$ consisting of those objects which belong to $\dgHo(\cA)$ or to $\dgHo(\cC)$. Thanks to the preceding semi-orthogonal decomposition, the inclusion dg functor $\cB'\subseteq \cB$ is a Morita equivalence. Consider also the dg functor $\pi\colon \cB' \to \cC$ which is the identity on $\cC$ and which sends all the remaining objects to a fixed zero object $0$ of $\cC$. Under these notations, we have the following split short exact sequence of dg categories $0 \to \cA \subseteq \cB' \stackrel{\pi}{\to} \cC \to 0$. We hence conclude that the assignment $\cA \mapsto \cA$ gives rise to a group homomorphism $\cP \cT(k) \to K_0^{(2)}(k)^s$. 

As explained in \cite[\S5]{Additive}, the pretriangulated dg categories can be (conceptually) characterized as the fibrant objects of the Quillen model structure on $\dgcat(k)$; see \S\ref{sec:dg}. Moreover, given a dg category $\cA$, the Yoneda dg functor $\cA \to \perf_\dg(\cA), x \mapsto \widehat{x}$, is a fibrant resolution. This implies that $\cP \cT(k) \to K_0^{(2)}(k)^s$ is moreover a surjective ring homomorphism. It remains then only to show its injectivity. Given a split short exact sequence of smooth proper dg categories $0 \to \cA \to \cB \to \cC \to 0$, which we can assume pretriangulated, recall from Remark \ref{rk:orthogonal} that we have an associated semi-orthogonal decomposition $\dgHo(\cB) = \langle \dgHo(\cA), \dgHo(\cC)\rangle$. Let us write $\cC'$ for the full dg subcategory of $\cB$ consisting of those objects which belong to $\dgHo(\cC)$. Note that $\cC'$ is pretriangulated and quasi-equivalent to $\cC$. Note also that since the triangulated subcategory $\dgHo(\cA) \subseteq \dgHo(\cB)$ is admissible, the triangulated subcategory $\dgHo(\cC') \simeq \dgHo(\cA)^\perp \subseteq \dgHo(\cB)$ is also admissible. We hence conclude that the relation $[\cB]=[\cA]+[\cC] \Leftrightarrow [\cB]= [\cA] + [\cC']$ holds in $\cP \cT(k)$, and consequently that the surjective ring homomorphism $\cP\cT(k) \to K_0^{(2)}(k)^s$ is moreover injective.
\end{proof}
Thanks to Proposition \ref{prop:main1}, the proof of Theorem \ref{thm:main} follows now automatically from the following result of independent interest:
\begin{theorem}\label{thm:main2}
Let $0 \to \cA \to \cB \to \cC \to 0$ be a short exact sequence of dg categories. If $\cA$ is smooth proper and $\cB$ is proper, then the sequence is split.
\end{theorem}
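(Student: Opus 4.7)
The plan is to prove the sequence splits by exhibiting a right adjoint to the inclusion $\cD_c(\cA) \subseteq \cD_c(\cB)$; together with Remark \ref{rk:orthogonal} this yields the required semi-orthogonal decomposition. The candidate is the restriction functor $i^*\colon \cD(\cB) \to \cD(\cA)$ along the inclusion $i\colon \cA \hookrightarrow \cB$ that realizes the short exact sequence (per \cite[Lem.~10.3]{Duke} as recalled in the excerpt). At the unbounded level, $i^*$ is the usual right adjoint to the left Kan extension $i_!\colon \cD(\cA) \to \cD(\cB)$; since $i_!$ sends the $\cA$-representable $\widehat{a}$ to the $\cB$-representable $\widehat{a}$, its restriction to compact objects is precisely the fully faithful inclusion $\cD_c(\cA) \hookrightarrow \cD_c(\cB)$ furnished by the short exact sequence. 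The content of the theorem therefore reduces to verifying that $i^*$ preserves compactness.

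I will verify this in two steps, using each hypothesis in turn. First, the properness of $\cB$ guarantees that each representable $\widehat{b}\colon \cB^\op \to \cC_\dg(k)$ has perfect $k$-module values $\widehat{b}(b') = \cB(b', b)$; since every $X \in \cD_c(\cB)$ is a retract of a finite cell of shifted representables, each $X(b)$ lies in $\cD_c(k)$, and in particular $i^*X$ is ``locally perfect'' as a dg $\cA$-module. Second, I invoke the classical characterization that for a \emph{smooth proper} dg category $\cA$ a right dg $\cA$-module $M$ is compact if and only if each $M(a)$ is perfect in $\cD(k)$. The ``only if'' direction uses only properness of $\cA$; the substantive ``if'' direction resolves $M$ as $M \otimes^{\bf L}_{\cA} \cA$ via the diagonal bimodule and exploits that smoothness of $\cA$ makes the diagonal a compact $\cA^\op \otimes^{\bf L} \cA$-module, hence a retract of a finite cell of external products of representables; tensoring a locally perfect module with such a bimodule cell visibly produces a compact $\cA$-module. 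Combining the two steps gives $i^*X \in \cD_c(\cA)$ whenever $X \in \cD_c(\cB)$.

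The ``perfect equals locally perfect'' characterization for smooth proper dg categories is the only substantive structural input, and securing it cleanly is the main obstacle. Morally it expresses the ``injectivity'' of smooth proper dg categories hinted at in the introduction, and it is precisely what distinguishes the smooth proper hypothesis on $\cA$ from the bare properness hypothesis on $\cB$. Once it is in hand, the rest is formal: the adjunction $i_! \dashv i^*$ is standard, and the identification of $i_!$ on compacts with the inclusion from the short exact sequence follows immediately from $i_!$ sending representables to representables. Hence $i^*$ restricts to a right adjoint of the inclusion on compact objects, giving the admissibility of $\cD_c(\cA) \subseteq \cD_c(\cB)$, and thus the desired splitting.
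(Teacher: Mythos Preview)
Your argument correctly establishes right admissibility, and the mechanism is essentially the paper's, packaged differently. Your key input---that for a smooth dg category $\cA$ every ``locally perfect'' (pseudo-perfect) right $\cA$-module is compact---is equivalent to the paper's Lemma~\ref{lem:aux}, which asserts $\cA \simeq \rep_\dg(\cA^\op,\perf_\dg(k))$: indeed $\rep(\cA^\op,\perf_\dg(k))$ is by definition the category of pseudo-perfect right $\cA$-modules. The paper proves this via abstract dualizability in $\Hmo(k)$, whereas you sketch the hands-on resolution by the diagonal bimodule (as in To\"en--Vaqui\'e). Both are standard. Your framing via ``$i^*$ preserves compacts'' is arguably cleaner than the paper's explicit representability-then-cone construction, but they coincide on the nose: for $z\in\cB$, your $i^*\widehat z = \cB(-,z)|_{\cA}$ is exactly the paper's representing object~$x$.

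There is, however, a genuine gap. The paper's Definition of \emph{split} requires $\cD_c(\cA)\subseteq\cD_c(\cB)$ to be \emph{admissible}, which in the Bondal--Kapranov sense means both left and right admissible; the paper's proof explicitly treats both directions (``The proof of left admissibility is similar: simply replace $\cB(-,z)$ by the covariant dg functor $\cB(z,-)$''), and this two-sidedness is genuinely used in Proposition~\ref{prop:main1} to conclude that $\dgHo(\cA)^\perp$ is again admissible. You only construct the right adjoint, and your appeal to Remark~\ref{rk:orthogonal} does not close the gap: that remark deduces a semi-orthogonal decomposition \emph{from} the split hypothesis, it is not the definition. At the level of big module categories $i_!$ has no further left adjoint, so the single adjunction $i_!\dashv i^*$ cannot by itself produce the left adjoint on compacts. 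The fix is the symmetric argument: run your proof with $i^\op\colon\cA^\op\hookrightarrow\cB^\op$, using that $\cA^\op$ is smooth and $\cB^\op$ is proper, to see that restriction of \emph{left} modules also preserves compacts; unwinding, this yields the left adjoint to the inclusion $\dgHo(\cA)\subseteq\dgHo(\cB)$. You should add this explicitly.
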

\begin{proof}
Without loss of generality, we can assume that the dg categories $\cA$ and $\cB$ are pretriangulated. Let us prove first that the triangulated subcategory $\dgHo(\cA) \subseteq \dgHo(\cB)$ is right admissible, \ie that the inclusion functor admits a right adjoint. Given an object $z \in \cB$, consider the following composition 
\begin{equation}\label{eq:functor}
\dgHo(\cA)^\op \stackrel{\dgHo(\cB(-,z))}{\too} \dgHo(\perf_\dg(k))\simeq \cD_c(k) \stackrel{H^0(-)}{\too}\mathrm{mod}(k)
\end{equation} 
with values in the category of finitely generated $k$-modules. Thanks to Proposition \ref{prop:corepresentability} (with $F=\cB(-,z)$), the functor \eqref{eq:functor} is representable. Let us denote by $x$ the representing object. Since the composition \eqref{eq:functor} is naturally isomorphic to the (contravariant) functor $\Hom_{\dgHo(\cB)}(-,z)\colon \dgHo(\cA)^\op \to \mathrm{mod}(k)$, we have $\Hom_{\dgHo(\cA)}(y,x)\simeq \Hom_{\dgHo(\cB)}(y,z)$ for every $y \in \cA$. By taking $y=x$, we hence obtain a canonical morphism $\eta\colon x \to z$ and consequently a distinguished triangle $x \stackrel{\eta}{\to} z\to \mathrm{cone}(\eta) \to \Sigma(x)$ in the triangulated category $\dgHo(\cB)$. The associated long exact sequences allow us then to conclude that $\mathrm{cone}(\eta)$ belongs to the right orthogonal $\dgHo(\cA)^\perp\subseteq \dgHo(\cB)$. This implies that the triangulated subcategory $\dgHo(\cA) \subseteq \dgHo(\cB)$ is right admissible. The proof of left admissibility is similar: simply replace $\cB(-,z)$ by the covariant dg functor $\cB(z,-)$; see Remark \ref{rk:duality}.
\end{proof}
\begin{notation}[Bimodules]\label{not:bimodules}
Let $\cA$ and $\cB$ be two dg categories. A {\em dg $\cA\text{-}\cB$ bimodule} is a dg functor $\mathrm{B}\colon\cA \otimes^{\bf L} \cB^\op\to \cC_\dg(k)$, \ie a right dg $(\cA^\op \otimes^{\bf L} \cB)$-module. Associated to a dg functor $F\colon \cA \to \cB$, we have the dg $\cA\text{-}\cB$-bimodule 
\begin{eqnarray}\label{eq:bimodule2}
{}_F\mathrm{B}\colon\cA\otimes^{\bf L} \cB^\op \too \cC_\dg(k) && (x,z) \mapsto \cB(z,F(x))\,.
\end{eqnarray}
Let us write $\rep(\cA,\cB)$ for the full triangulated subcategory of $\cD(\cA^\op\otimes^{\bf L} \cB)$ consisting of those dg $\cA\text{-}\cB$ bimodules $\mathrm{B}$ such that for every object $x \in \cA$ the associated right dg $\cB$-module $\mathrm{B}(x,-)$ belongs to $\cD_c(\cB)$. Similarly, let $\rep_\dg(\cA,\cB)$ be the full dg subcategory of $\cC_\dg(\cA^\op \otimes^{\bf L} \cB)$ consisting of those cofibrant right dg $\cA\text{-}\cB$ bimodules which belong to $\rep(\cA,\cB)$. By construction,~$\dgHo(\rep_\dg(\cA,\cB))\simeq \rep(\cA,\cB)$. 
\end{notation}

\begin{proposition}[Representability]\label{prop:corepresentability}
Let $\cA$ be a smooth proper pretriangulated dg category and $G\colon \dgHo(\cA)^\op \to \mathrm{mod}(k)$ be a (contravariant) functor with values in the category of finitely generated $k$-modules. Assume that there exists a dg functor $F\colon\cA^\op \to \perf_\dg(k)$ and a natural isomorphism between $G$ and the composition
$$ \dgHo(\cA)^\op \stackrel{\dgHo(F)}{\too} \dgHo(\perf_\dg(k)) \simeq \cD_c(k) \stackrel{H^0(-)}{\too} \mathrm{mod}(k)\,.$$
Under these assumptions, the functor $G$ is representable.
\end{proposition}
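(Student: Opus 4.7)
The plan is to view $F$ as a right dg $\cA$-module and exploit smooth properness to promote it to a compact one, then use pretriangulatedness to represent it.

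First, observe that a dg functor $F\colon \cA^\op \to \perf_\dg(k) \subseteq \cC_\dg(k)$ is precisely a right dg $\cA$-module whose value $F(y)$ at every object $y \in \cA$ is a perfect complex of $k$-modules; in the notation of \ref{not:bimodules}, $F$ is an object of $\rep(\cA,k)$. Under the equivalence $\dgHo(\perf_\dg(k)) \simeq \cD_c(k)$, the given natural isomorphism reads $G(y) \simeq H^0(F(y))$ for all $y \in \cA$, naturally in $y$.

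The heart of the proof is the following claim: since $\cA$ is smooth and proper, $F$ in fact belongs to $\cD_c(\cA)$. This is the standard consequence of dualizability of $\cA$ in $\Hmo(k)$ (see \cite[\S5]{CT1}): smoothness says that the diagonal bimodule $\Id_\cA$ is compact in $\cD(\cA^\op \otimes^{\bf L} \cA)$, and properness says that the representables $\widehat{x}$ are pointwise compact; together these two conditions imply that the subcategory of pointwise compact right dg $\cA$-modules coincides with $\cD_c(\cA)$. With this in hand, since $\cA$ is pretriangulated, the Yoneda dg functor $\cA \to \perf_\dg(\cA)$ induces an equivalence $\dgHo(\cA) \isotoo \cD_c(\cA)$; consequently there exists an object $x \in \cA$ together with an isomorphism $\widehat{x} \simeq F$ in $\cD(\cA)$.

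Applying $H^0$ pointwise to this isomorphism yields, for every $y \in \cA$, natural identifications
\[
G(y) \;\simeq\; H^0(F(y)) \;\simeq\; H^0(\widehat{x}(y)) \;=\; H^0(\cA(y,x)) \;=\; \Hom_{\dgHo(\cA)}(y,x),
\]
so $G$ is represented by $x$. The main obstacle is the claim that smooth properness forces the pointwise perfect module $F$ to be globally perfect; this is where both hypotheses on $\cA$ enter essentially and is the only non-formal input in the argument, the remaining steps being the pretriangulated Yoneda lemma plus taking $H^0$.
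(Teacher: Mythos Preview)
Your argument is correct and is in substance the same as the paper's, but the key step is packaged differently. The paper isolates an auxiliary Lemma~\ref{lem:aux} asserting that the dg functor $\cA \to \rep_\dg(\cA^\op,\perf_\dg(k))$, $x\mapsto {}_{\widehat{x}}\mathrm{B}$, is a quasi-equivalence, and proves it by adjunction from the evaluation map together with \emph{uniqueness of duals} in the symmetric monoidal category $\Hmo(k)$. You bypass this formalism and instead invoke the equivalent, more hands-on fact that smoothness of $\cA$ forces every pointwise-compact right $\cA$-module to lie in $\cD_c(\cA)$; combined with pretriangulatedness this produces the same object $x$ with $F\simeq \widehat{x}$. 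Your route is arguably more elementary (no internal Homs in $\Hmo(k)$), while the paper's yields the full equivalence $\dgHo(\cA)\simeq \rep(\cA^\op,\perf_\dg(k))$ as a reusable byproduct. After this step the two proofs coincide: apply $H^0$ pointwise.

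One small notational slip: in the conventions of Notation~\ref{not:bimodules}, a right dg $\cA$-module with pointwise-perfect values lies in $\rep(\cA^\op,k)\subset \cD(\cA)$, not in $\rep(\cA,k)\subset \cD(\cA^\op)$. This does not affect your argument.
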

\begin{remark}\label{rk:duality}
Given a smooth proper pretriangulated dg category $\cA$, the opposite dg category $\cA^\op$ is also smooth, proper, and pretriangulated. Therefore, Proposition \ref{prop:corepresentability} (with $\cA$ replaced by $\cA^\op$) is also a corepresentability result.
\end{remark}
\begin{proof}
Following Notation \ref{not:bimodules}, let ${}_F\mathrm{B} \in \rep(\cA^\op,\perf_\dg(k))$ be the dg $\cA^\op\text{-}\perf_\dg(k)$ bimodule associated to the dg functor $F$. Thanks to Lemma \ref{lem:aux} below, there exists an object $x \in \cA$ and an isomorphism in the triangulated category $\rep(\cA^\op,\perf_\dg(k))$ between the dg $\cA^\op\text{-}\perf_\dg(k)$ bimodules ${}_F \mathrm{B}$ and ${}_{\widehat{x}}\mathrm{B}$. Making use of the functor 
\begin{eqnarray}
\rep(\cA^\op,\perf_\dg(k))\too \mathrm{Fun}_{\Delta}(\dgHo(\cA)^\op, \cD_c(k)) && \mathrm{B} \mapsto - \otimes^{\bf L}_{\cA^\op} \mathrm{B}\,,
\end{eqnarray}
where $\mathrm{Fun}_\Delta(-,-)$ stands for the category of triangulated functors, we obtain an isomorphism between the functors $-\otimes^{\bf L}_{\cA^\op} {}_F\mathrm{B}\simeq \dgHo(F)$ and $-\otimes^{\bf L}_{\cA^\op} {}_{\widehat{x}}\mathrm{B}\simeq \dgHo(\widehat{x})$. By composing them with $H^0(-)\colon \cD_c(k) \to \mathrm{mod}(k)$, we hence conclude that $G$ is naturally isomorphic to the representable functor $\Hom_{\dgHo(\cA)}(-,x)$.
%
%
%
%
%
\end{proof}
\begin{lemma}\label{lem:aux}
Given a smooth proper pretriangulated dg category $\cA$, the dg functor 
\begin{eqnarray}\label{eq:quasi}
\cA \too \rep_\dg(\cA^\op,\perf_\dg(k))&& x \mapsto {}_{\widehat{x}}\mathrm{B}
\end{eqnarray} 
is a quasi-equivalence.
\end{lemma}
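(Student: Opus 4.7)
The plan is to factor the dg functor \eqref{eq:quasi} as
\[
\cA \; \too \; \perf_\dg(\cA) \; \stackrel{\sim}{\longleftarrow} \; \rep_\dg(\cA^\op,\perf_\dg(k)),
\]
where the first arrow is the ordinary dg Yoneda embedding and the second is a quasi-equivalence obtained by restricting a bimodule to the tensor unit of $\perf_\dg(k)$. Both arrows will be quasi-equivalences, so two-out-of-three will yield the lemma.

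To construct the second arrow (or rather, its quasi-inverse), I would introduce the evaluation dg functor $E\colon \rep_\dg(\cA^\op,\perf_\dg(k)) \to \perf_\dg(\cA)$ defined by $\mathrm{B} \mapsto \mathrm{B}(-,k)$, where $k$ denotes the $\otimes$-unit of $\perf_\dg(k)$. Since $k$ is a compact generator of $\cD(\perf_\dg(k))$, the Morita equivalence $\perf_\dg(k)\simeq k$ identifies $\cD_c(\perf_\dg(k))$ with $\perf(k)$ via $M\mapsto M(k)$; thus the defining condition $\mathrm{B}(x,-)\in \cD_c(\perf_\dg(k))$ amounts to $\mathrm{B}(x,k)\in \perf(k)$, so $E(\mathrm{B})$ is a right dg $\cA$-module with perfect values. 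The hypotheses on $\cA$ enter here in an essential way: properness guarantees that the values $\cA(y,x)$ are perfect over $k$, and the combination smooth $+$ proper is what yields $\cD_c(\cA)=\cD_{\mathrm{perf}}(\cA)$, ensuring that $E(\mathrm{B})$ (after cofibrant replacement) lies in $\perf_\dg(\cA)$. An explicit quasi-inverse is $\Theta(M):=\{(y,P)\mapsto \perf_\dg(k)(P,M(y))\}$, with $E\Theta\simeq \mathrm{id}$ and $\Theta E\simeq \mathrm{id}$ both reducing to the dg Yoneda lemma applied in the $P$-variable.

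A direct unwinding now gives $E({}_{\widehat{x}}\mathrm{B})(y)=\perf_\dg(k)(k,\widehat{x}(y))\cong \widehat{x}(y)$ naturally in $x$ and $y$, so the composite of \eqref{eq:quasi} with $E$ is precisely the dg Yoneda functor $\cA\to \perf_\dg(\cA)$, $x\mapsto \widehat{x}$. This functor is quasi-fully-faithful by the dg Yoneda lemma, and essentially surjective on $\dgHo$ by the pretriangulated hypothesis on $\cA$ (which by definition asserts that $\dgHo(\cA)\to \cD_c(\cA)=\dgHo(\perf_\dg(\cA))$ is an equivalence). Hence $E\circ\eqref{eq:quasi}$ is a quasi-equivalence, and two-out-of-three forces \eqref{eq:quasi} itself to be a quasi-equivalence.

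The main obstacle I anticipate is establishing cleanly that $E$ is a quasi-equivalence onto $\perf_\dg(\cA)$: this is the Morita-style identification $\rep_\dg(\cA^\op,\perf_\dg(k))\simeq \perf_\dg(\cA)$, and it is precisely the step where both smoothness and properness of $\cA$ are needed (without them $E$ would land only in the larger dg category of pointwise-perfect right dg $\cA$-modules, rather than inside $\perf_\dg(\cA)$ itself). Everything else is dg Yoneda plus the definition of pretriangulated.
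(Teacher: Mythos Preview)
Your argument is correct, but it follows a different path from the paper's own proof. The paper argues abstractly: it invokes the characterization of smooth proper dg categories as the dualizable objects of the closed symmetric monoidal category $\Hmo(k)$, observes that the evaluation morphism $\cA\otimes^{\bf L}\cA^\op\to\perf_\dg(k)$ corresponds under the internal-Hom adjunction precisely to the dg functor \eqref{eq:quasi}, and then appeals to uniqueness of duals to conclude that \eqref{eq:quasi} is a Morita equivalence (hence a quasi-equivalence between pretriangulated dg categories). Your approach instead unpacks this by hand: you use the Morita equivalence $\perf_\dg(k)\simeq k$ to build an explicit evaluation functor $E$ identifying $\rep_\dg(\cA^\op,\perf_\dg(k))$ with $\perf_\dg(\cA)$, and then reduce \eqref{eq:quasi} to the dg Yoneda embedding. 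The point where you invoke smoothness and properness---namely that every right $\cA$-module with perfect values is compact---is exactly the statement $\rep(\cA^\op,k)=\cD_c(\cA)$, which the paper records elsewhere as a consequence of dualizability. So the two proofs are close cousins: the paper's is shorter and more conceptual, while yours is more explicit and avoids citing the dualizability machinery, at the cost of having to verify directly that $E$ lands in $\perf_\dg(\cA)$ (which you correctly flag as the main point).
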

\begin{proof}
As proved in \cite[Thm.~5.8]{CT1}, the dualizable objects of the symmetric monoidal category $\Hmo(k)$ are the smooth proper dg categories. Moreover, the dual of a smooth proper dg category $\cA$ is the opposite dg category $\cA^\op$ and the evaluation morphism is given by the following dg functor \begin{eqnarray}\label{eq:bimodule}
\cA\otimes^{\bf L} \cA^\op \too \perf_\dg(k) && (x,y) \mapsto \cA(y,x)\,.
\end{eqnarray}
The symmetric monoidal category $\Hmo(k)$ is closed; see \cite[\S4.3]{ICM-Keller}. Given dg categories $\cA$ and $\cB$, their internal Hom is given by $\rep_\dg(\cA,\cB)$. Therefore, by adjunction, \eqref{eq:bimodule} corresponds to the dg functor \eqref{eq:quasi}. Thanks to the unicity of dualizable objects, we hence conclude that \eqref{eq:quasi} is a Morita equivalence. The proof follows now from the fact that a Morita equivalence between pretriangulated dg categories is necessarily a quasi-equivalence; see \cite[\S5]{Additive}.
\end{proof}
\subsection*{Alternative proof of Theorem \ref{thm:main2} when $k$ is a field}
Without loss of generality, we can assume that the dg categories $\cA$ and $\cB$ are pretriangulated. Note first that $\cB$ is proper if and only if we have $\sum_n \mathrm{dim}\, \Hom_{\dgHo(\cB)}(w,z[n])< \infty$ for any two objects $w$ and $z$. Since the dg category $\cA$ is smooth smooth, the triangulated category $\dgHo(\cA)$ admits a strong generator in the sense of Bondal-Van den Bergh; see Lunts \cite[Lem.~3.5 and 3.6]{Lunts}. Using the fact that the (contravariant) functor \eqref{eq:functor} is cohomological and that the triangulated category $\dgHo(\cA)$ is idempotent complete, we hence conclude from Bondal-Van den Bergh's powerful (co)representability result \cite[Thm.~1.3]{BV} that the functor $\Hom_{\dgHo(\cB)}(-,z)\colon \dgHo(\cA)^\op \to \mathrm{vect}(k)$ is representable. The remaining of the proof is now similar. 
\begin{remark}[Orlov's regularity]
Following Orlov \cite[Def.~3.13]{Orlov}, a dg category $\cA$ is called {\em regular} if the triangulated category $\cD_c(\cA)$ admits a strong generator in the sense of Bondal-Van den Bergh. Examples include the dg categories of perfect complexes associated to regular separated noetherian $k$-schemes. Smoothness implies regularity but the converse does not hold. The preceding proof shows us that Theorem \ref{thm:main2} holds more generally with $\cA$ a regular proper dg category.
\end{remark}



\section{Noncommutative motives}\label{sec:ncmotives}
For a survey, resp. book, on noncommutative motives, we invite the reader to consult \cite{Buenos}, resp. \cite{book}. Let $\cA$ and $\cB$ be two dg categories. As proved in \cite[Cor.~5.10]{Additive}, we have an identification between $\Hom_{\Hmo(k)}(\cA,\cB)$ and the isomorphism classes of the category $\rep(\cA,\cB)$, under which the composition law of $\Hmo(k)$ corresponds to the derived tensor product of bimodules. Since the dg $\cA\text{-}\cB$ bimodules \eqref{eq:bimodule2} belong to $\rep(\cA,\cB)$, we hence obtain a symmetric monoidal functor
\begin{eqnarray}\label{eq:func1}
\dgcat(k) \too \Hmo(k) & \cA\mapsto \cA & F \mapsto {}_F \mathrm{B}\,.
\end{eqnarray}
The {\em additivization} of $\Hmo(k)$ is the additive category $\Hmo_0(k)$ with the same objects as $\Hmo(k)$ and with morphisms given by $\Hom_{\Hmo_0(k)}(\cA,\cB):=K_0\rep(\cA,\cB)$, where $K_0\rep(\cA,\cB)$ stands for the Grothendieck group of the triangulated category $\rep(\cA,\cB)$. The composition law is induced by the derived tensor product of bimodules and the symmetric monoidal structure extends by bilinearity from $\Hmo(k)$ to $\Hmo_0(k)$. Note that we have a symmetric monoidal functor
\begin{eqnarray}\label{eq:func2}
\Hmo(k) \too \Hmo_0(k) & \cA \mapsto \cA & \mathrm{B} \mapsto [\mathrm{B}]\,.
\end{eqnarray}
Given a commutative ring of coefficients $R$, the {\em $R$-linearization} of $\Hmo_0(k)$ is the $R$-linear category $\Hmo_0(k)_R$ obtained by tensoring the morphisms of $\Hmo_0(k)$ with $R$. Note that $\Hmo_0(k)_R$ inherits an $R$-linear symmetric monoidal structure and that we have the symmetric monoidal functor
\begin{eqnarray}\label{eq:func3}
\Hmo_0(k) \too \Hmo_0(k)_R & \cA \mapsto \cA & [\mathrm{B}] \mapsto [\mathrm{B}]_R\,.
\end{eqnarray}
Let us denote by $U(-)_R\colon \dgcat(k) \to \Hmo_0(k)_R$ the composition $\eqref{eq:func3}\circ \eqref{eq:func2}\circ \eqref{eq:func1}$.
\subsection*{Noncommutative Chow motives}
The category of {\em noncommutative Chow motives} $\NChow(k)_R$ is defined as the idempotent completion of the full subcategory of $\Hmo_0(k)_R$ consisting of the objects $U(\cA)_R$ with $\cA$ a smooth proper dg category. This category is not only $R$-linear and idempotent complete, but also additive and rigid\footnote{Recall that a symmetric monoidal category is called {\em rigid} if all its objects are dualizable.} symmetric monoidal; see \cite[\S4]{Buenos}. Given dg categories $\cA$ and $\cB$, with $\cA$ smooth proper, we have $\rep(\cA,\cB) \simeq \cD_c(\cA^\op \otimes^{\bf L} \cB)$. Hence, we obtain~isomorphisms
$$ \Hom_{\NChow(k)_R}(U(\cA)_R,U(\cB)_R):=K_0(\rep(\cA,\cB))_R\simeq K_0(\cA^\op \otimes^{\bf L}\cB)_R\,.$$
When $R=\bbZ$, we write $\NChow(k)$ instead of $\NChow(k)_\bbZ$ and $U$ instead of $U(-)_\bbZ$.
\subsection*{Noncommutative numerical motives}
Given an $R$-linear, additive, rigid symmetric monoidal category $\cC$, its {\em $\cN$-ideal} is defined as follows
$$ \cN(a,b):=\{f \in \Hom_\cC(a,b)\,|\, \forall g \in \Hom_\cC(b,a)\,\,\mathrm{we}\,\,\mathrm{have}\,\,\mathrm{tr}(g\circ f)=0 \}\,,$$
where $\mathrm{tr}(g\circ f)$ stands for the categorical trace of the endomorphism $g\circ f$. The category of {\em noncommutative numerical motives} $\NNum(k)_R$ is defined as the idempotent completion of the quotient of $\NChow(k)_R$ by the $\otimes$-ideal $\cN$. By construction, this category  is $R$-linear, additive, rigid symmetric monoidal, and idempotent complete. 
\begin{notation}\label{not:CSA}
In the case where $k$ is a field, we write $\mathrm{CSA}(k)_R$ for the full subcategory of $\NNum(k)_R$ consisting of the objects $U(A)_R$ with $A$ a central simple $k$-algebra, and $\mathrm{CSA}(k)^\oplus_R$ for the closure of $\mathrm{CSA}(k)_R$ under finite direct sums.
\end{notation}
The following result is a slight variant of \cite[Thm.~1.10]{MT}.
\begin{theorem}[Semi-simplicity]\label{thm:semi}
Let $k$ be a commutative ring of characteristic zero, resp. positive prime characteristic $p>0$, and $R$ a field with the same characteristic. If $K_0(k)_\bbQ\simeq \bbQ$, resp. $K_0(k)_{\bbF_p}\simeq \bbF_p$, then $\NNum(k)_R$ is abelian semi-simple.
\end{theorem}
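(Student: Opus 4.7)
The plan is to reduce the statement to \cite[Thm.~1.10]{MT}, which proves the same assertion in the particular case where $k$ is a field, and to explain the minor modification needed to accommodate the more general commutative base rings allowed here.

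First I would recall the abstract criterion (due to Jannsen and, in the $\otimes$-categorical form we need, to Andr{\'e}--Kahn): any $R$-linear, idempotent complete, rigid symmetric monoidal category $\cC$ over a field $R$ becomes abelian semi-simple after quotienting by the $\otimes$-ideal $\cN$, provided that (i) $\End_\cC({\bf 1})\simeq R$ and (ii) every $\Hom_\cC(a,b)$ is finite-dimensional over $R$. The target category $\NNum(k)_R$ is already the pseudo-abelian quotient of $\NChow(k)_R$ by $\cN$, so it suffices to verify (i) and (ii) for $\NChow(k)_R$. For (i), note that
$$\End_{\NChow(k)_R}(U(k)_R)=K_0(k\otimes^{\bf L}k)_R=K_0(k)_R\,,$$
and by hypothesis this agrees with $\bbQ$, resp.\ $\bbF_p$; combined with an arbitrary field $R$ of the same characteristic, the base change gives $\End_{\NChow(k)_R}(U(k)_R)\simeq R$, exactly as in the field case.

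Next I would treat (ii). For smooth proper dg categories $\cA,\cB$, one has the identification $\Hom_{\NChow(k)_R}(U(\cA)_R,U(\cB)_R)\simeq K_0(\cA^\op\otimes^{\bf L}\cB)_R$. The strategy of \cite[Thm.~1.10]{MT} is to show that these groups become finite-dimensional after quotienting by $\cN$ by exploiting the categorical trace pairing
$$K_0(\cA^\op\otimes^{\bf L}\cB)_R\,\otimes_R\,K_0(\cB^\op\otimes^{\bf L}\cA)_R\too K_0(k)_R\,,$$
which is non-degenerate modulo $\cN$ by the very definition of $\cN$. The only use of the field hypothesis on $k$ in \loccit{} is to identify the target $K_0(k)_R$ with $R$; under the $K_0$-hypothesis of the present theorem this identification still holds, so the pairing takes values in $R$ and is non-degenerate modulo $\cN$. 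Consequently each quotient $\Hom_{\NChow(k)_R}(U(\cA)_R,U(\cB)_R)/\cN$ embeds into the $R$-dual of the analogous group with $\cA$ and $\cB$ interchanged, and a standard argument (reducing, via duality in $\NChow(k)_R$, to a single endomorphism algebra, and then using that the trace form exhibits such an algebra as a finite-dimensional quotient of a bounded $K$-theory group) yields the desired finite-dimensionality.

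I expect the only delicate point to be the finite-dimensionality of $\Hom_{\NChow(k)_R}(U(\cA)_R,U(\cB)_R)$ modulo $\cN$: in \cite{MT} this rests on comparison results between $K$-theory and (periodic) cyclic homology that are stated for fields, and one must check that the proof survives the generalization to a commutative ring $k$ whose rationalized (or mod $p$) $K_0$ equals the coefficient field. Granting this, the semi-simplicity of $\NNum(k)_R$ follows by a direct appeal to the Jannsen--Andr{\'e}--Kahn criterion, exactly as in the proof of \cite[Thm.~1.10]{MT}.
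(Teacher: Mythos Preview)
Your route diverges from the paper's and, as written, has a genuine gap in step~(ii). The paper does \emph{not} try to establish finite-dimensionality of the Hom-spaces in $\NChow(k)_R$ (or even in $\NNum(k)_R$) and then invoke a Jannsen-style criterion. Instead it builds a \emph{realization functor}: Hochschild homology gives a symmetric monoidal functor $HH\colon\NChow(k)\to\cD_c(k)$, and by passing to a suitable residue field $F$ of $k$ (localize at $\bbZ\setminus\{0\}$ first when $\mathrm{char}(k)=0$, then mod out by a maximal ideal) one obtains a $\bbQ$-linear, resp.\ $\bbF_p$-linear, symmetric monoidal functor $\NChow(k)_\bbQ\to\cD_c(F)$, resp.\ $\NChow(k)_{\bbF_p}\to\cD_c(F)$. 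Andr\'e--Kahn's theorem \cite[Thm.~1 a)]{AK1}, \cite[Thm.~A.2.10]{AK} is then applied \emph{to this functor}; the only additional input is $\End(U(k))_\bbQ\simeq\bbQ$ (resp.\ with $\bbF_p$), which is exactly the hypothesis on $K_0(k)$. A compatibility of the $\cN$-ideal with the field extension $R/\bbQ$ (resp.\ $R/\bbF_p$) finishes the argument.

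Your proposed verification of~(ii) does not go through as stated. Non-degeneracy of the trace pairing modulo $\cN$ is tautological---it is the definition of $\cN$---but it only yields an embedding of $\Hom/\cN$ into the $R$-dual of the opposite Hom-group, which is useless for finiteness unless you already know one side is finite-dimensional. The ``standard argument'' you sketch (reducing to an endomorphism algebra and invoking a bound on $K_0$) is not standard and is not how \cite{MT} proceeds: there, as here, the role of (periodic) cyclic/Hochschild homology is to supply the target category for a $\otimes$-functor, not to control the size of the source $K$-groups. If you want to salvage your approach you would need an independent proof that $K_0(\cA^\op\otimes^{\bf L}\cB)_R/\cN$ is finite-dimensional for smooth proper $\cA,\cB$ over a general commutative ring $k$; this is not obvious and is precisely what the realization-functor argument sidesteps.
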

\begin{proof}
As explained in \cite[Example 8.9]{CT1}, Hochschild homology gives rise to an additive symmetric monoidal functor $HH\colon \Hmo_0(k) \to \cD(k)$. The dualizable objects of the derived category $\cD(k)$ are the compact ones. Therefore, since the symmetric monoidal subcategory $\NChow(k)$ of $\Hmo_0(k)$ is rigid and every symmetric monoidal functor preserves dualizable objects, the preceding functor restricts to an additive symmetric monoidal functor
\begin{eqnarray}\label{eq:HH}
\NChow(k) \too \cD_c(k) && U(\cA) \mapsto HH(\cA)\,.
\end{eqnarray}
Assume first that $k$ is of characteristic zero. Let us write $k_\bbQ$ for the localization of $k$ at the multiplicative set $\bbZ\backslash\{0\}$. Choose a maximal ideal $\mathfrak{m}$ of $k_\bbQ$ and consider the associated residue field $F:=k_\bbQ/\mathfrak{m}$. By composing \eqref{eq:HH} with the base-change (derived) symmetric monoidal functors $\cD_c(k) \to \cD_c(k_\bbQ)\to \cD_c(F)$, we hence obtain an induced $\bbQ$-linear symmetric monoidal functor
\begin{eqnarray}\label{eq:HH1}
\NChow(k)_\bbQ \too \cD_c(F) && U(\cA)_\bbQ \mapsto HH(\cA) \otimes^{\bf L}_k F\,.
\end{eqnarray}
Since by assumption we have $\mathrm{End}_{\NChow(k)_\bbQ}(U(k)_\bbQ)=K_0(k)_\bbQ\simeq \bbQ$, we hence conclude from Andr{\'e}-Kahn's general results \cite[Thm.~1 a)]{AK1}\cite[Thm.~A.2.10]{AK}, applied to the functor \eqref{eq:HH1}, that the category $\NNum(k)_\bbQ$ is abelian semi-simple. The proof follows now from the fact that the $\otimes$-ideal $\cN$ is compatible with change of coefficients along the field extension $R/\bbQ$; consult  \cite[Prop.~1.4.1]{Brug} for further details.

Assume now that $k$ is of positive prime characteristic $p>0$. Choose a maximal ideal $\mathfrak{m}$ of $k$ and consider the associated residue field $F:=k/\mathfrak{m}$. As in the characteristic zero case, we obtain an induced $\bbF_p$-linear symmetric monoidal functor
\begin{eqnarray*}
\NChow(k)_{\bbF_p} \too \cD_c(F) && U(\cA)_{\bbF_p} \mapsto HH(\cA) \otimes^{\bf L}_k F\,,
\end{eqnarray*}
which allows us to conclude that the category $\NNum(k)_R$ is abelian semi-simple. 
\end{proof}
\section{Proof of Theorems \ref{thm:injective1}, \ref{thm:injective2}, and \ref{thm:injective3}}\label{sec:proof2}
We start by studying the noncommutative Chow motives of dg Azumaya algebras. These results are of independent interest.
\begin{proposition}\label{prop:dgAzumaya}
Let $k$ be a commutative ring and $A$ a dg Azumaya $k$-algebra which is not Morita equivalent to an ordinary Azumaya algebra. If $k$ is noetherian, then we have $U(A)_\bbQ\not\simeq U(k)_\bbQ$ in $\NChow(k)_\bbQ$ and $U(A)_{\bbF_q} \not\simeq U(k)_{\bbF_q}$ in $\NChow(k)_{\bbF_q}$ for every prime number $q$. 
\end{proposition}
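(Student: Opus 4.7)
The plan is to prove the contrapositive: if $U(A)_R\simeq U(k)_R$ in $\NChow(k)_R$, where $R\in\{\bbQ,\bbF_q\}$, then $A$ is Morita equivalent to an ordinary Azumaya $k$-algebra. First I would unpack the isomorphism using the identifications
$\Hom_{\NChow(k)_R}(U(k)_R,U(A)_R)=K_0(A)_R$ and $\Hom_{\NChow(k)_R}(U(A)_R,U(k)_R)=K_0(A^{\op})_R$. The iso is thus witnessed by classes $\alpha\in K_0(A)_R$ and $\beta\in K_0(A^{\op})_R$. Composition in $\NChow(k)_R$ corresponds to derived tensor of bimodules, so $\beta\circ\alpha\in K_0(k)_R$ must equal $1$, while $\alpha\circ\beta\in K_0(A^{\op}\otimes^{\bf L}A)_R$ must equal the identity class. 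The latter group identifies with $K_0(k)_R$ via the dg Azumaya isomorphism $A^{\op}\otimes^{\bf L}A\simeq\REnd_k(A)$, the target being Morita equivalent to $k$ because $A$ is a compact generator of $\cD(k)$; under this identification the identity class corresponds to $1\in K_0(k)_R$.

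Next I would clear denominators: choose an integer $N$ invertible in $R$ and perfect complexes $P\in\cD_c(A)$, $Q\in\cD_c(A^{\op})$ with $[P]=N\alpha$ and $[Q]=N\beta$. The two composition relations become
\[
[P\otimes^{\bf L}_A Q]\;=\;N^2\cdot 1\ \in\ K_0(k)_R,\qquad
[Q\otimes^{\bf L}_k P]\;=\;N^2\cdot [{}_A A_A]\ \in\ K_0(A^{\op}\otimes^{\bf L}A)_R\simeq K_0(k)_R.
\]
Using the noetherian hypothesis on $k$, I would argue that $P$ (or $P^{\oplus n}$ for suitable $n$) is a compact generator of $\cD_c(A)$, so that the dg $k$-algebra $B:=\REnd_A(P)^{\op}$ is Morita equivalent to $A$. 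Because Morita equivalence preserves the dg Azumaya property (compactness as a $k$-complex and invertibility of the canonical map $B^{\op}\otimes^{\bf L}B\to\REnd_k(B)$ both descend from $A$ through the equivalence $\cD(A)\simeq\cD(B)$), the dg algebra $B$ is itself dg Azumaya.

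The main obstacle is to promote $B$ to an \emph{ordinary} $k$-algebra (concentrated in cohomological degree zero). Here I would combine the K-theoretic relations above with To\"en's classification $\dBr(k)\simeq H^1_{\mathrm{et}}(\Spec k,\bbZ)\oplus H^2_{\mathrm{et}}(\Spec k,\bbG_m)$ and Gabber's theorem \cite[Thm.~II.1]{Gabber}. The scaled relation $[P\otimes^{\bf L}_A Q]=N^2\cdot 1$ in $K_0(k)_R$, with $N$ a unit in $R$, is exactly what is needed to force the derived Brauer class $[A]\in\dBr(k)$ to be torsion (the invertible $N$ provides the torsion exponent after transporting the K-theoretic identity into cohomology via To\"en's map). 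Once $[A]$ is torsion, Gabber's theorem (available since $k$ is noetherian) supplies an ordinary Azumaya $k$-algebra realizing this class, which is therefore Morita equivalent to $A$, contradicting the hypothesis on $A$. Both cases $R=\bbQ$ and $R=\bbF_q$ are handled uniformly, since the arithmetic input used is only that $N$ is invertible in $R$ together with noetherianness of $k$.
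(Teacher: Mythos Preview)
The crucial step~(5) is an assertion rather than an argument. You claim that the relation $[P\otimes^{\bf L}_A Q]=N^2\cdot 1$ in $K_0(k)_R$ forces $[A]\in\dBr(k)$ to be torsion, with $N$ as the exponent ``after transporting the $K$-theoretic identity into cohomology via To\"en's map''. But To\"en's isomorphism $\dBr(k)\simeq H^1_{\mathrm{et}}(\Spec k,\bbZ)\times H^2_{\mathrm{et}}(\Spec k,\bbG_m)$ merely classifies Morita classes of dg Azumaya algebras; it furnishes no map from $K_0(k)$, or from any $K$-theoretic data about perfect $k$-complexes, into \'etale cohomology. Your identity lives entirely inside $K_0(k)_R$ and says nothing, by itself, about where $[A]$ sits in $\dBr(k)$. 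Concretely: for $R=\bbF_q$ the map $K_0(A)\to K_0(A)_{\bbF_q}$ is already surjective, so one may take $N=1$; your heuristic would then force $[A]=0$. Yet over a field $k$ there exist central simple algebras $A$ with $[A]\neq 0$ in $\Br(k)$ and $q\nmid\ind(A)$, for which $U(A)_{\bbF_q}\simeq U(k)_{\bbF_q}$ (Proposition~\ref{prop:index}(ii)), so the heuristic is simply false. There are subsidiary gaps as well: an equality of classes in $K_0(A^{\op}\otimes^{\bf L}A)_R$ does not make $P$ a compact generator of $\cD(A)$, and even granting torsion of $[A]$, Gabber's theorem treats only $H^2_{\mathrm{et}}$, leaving the $H^1_{\mathrm{et}}$-component unaddressed.

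For comparison, the paper does not argue this in place: it cites \cite[Thm.~B.15]{Separable} for the rational case and notes that the $\bbF_q$-case follows by the same proof once one arranges that $q$ does not divide two auxiliary positive integers $m,n>0$ appearing there. Your contrapositive setup and the passage to integral $K$-classes are the right shape, but the substantive step --- how those integral relations actually pin down the Morita class of $A$ --- is precisely the content of that external reference, and your sketch does not supply it.
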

\begin{proof}
As proved in \cite[Thm.~B.15]{Separable}, we have $U(A)_\bbQ\not\simeq U(k)_\bbQ$ in $\NChow(k)_\bbQ$. The proof that $U(A)_{\bbF_q} \not\simeq U(k)_{\bbF_q}$ in $\NChow(k)_{\bbF_q}$ is similar: simply further assume that $q$ does not divides the positive integers $m,n>0$ used in {\em loc. cit.}
\end{proof}
\begin{proposition}\label{prop:index}
Let $k$ be a field, $A$ and $B$ two central simple $k$-algebras, and $R$ a commutative ring of positive prime characteristic $p>0$.
\begin{itemize}
\item[(i)] If $p\mid\mathrm{ind}(A^\op \otimes B)$, then $U(A)_R \not\simeq U(B)_R$ in $\NChow(k)_R$. Moreover, we have $\Hom_{\NNum(k)_R}(U(A)_R, U(B)_R)=\Hom_{\NNum(k)_R}(U(B)_R, U(A)_R)=0$.
\item[(ii)] If $p\nmid\mathrm{ind}(A^\op \otimes B)$ and $R$ is a field, then $U(A)_R\simeq U(B)_R$ in $\NChow(k)_R$.
\end{itemize}
\end{proposition}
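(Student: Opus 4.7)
The plan is to reduce both parts to a single $K_0$-pairing computation for central simple algebras, using the identifications recalled in \S\ref{sec:ncmotives}. Since $A^\op\otimes B$, $A^\op\otimes A$ and $B^\op\otimes A$ are all central simple $k$-algebras, their $K_0$'s are free abelian of rank one, so
\[
\Hom_{\NChow(k)_R}(U(A)_R,U(B)_R)=K_0(A^\op\otimes^{\bf L} B)_R\cong R\cdot[S],
\]
where $S$ denotes the unique simple $(A^\op\otimes B)$-module; symmetrically $\Hom_{\NChow(k)_R}(U(B)_R,U(A)_R)\cong R\cdot[T]$ for $T$ the simple $(B^\op\otimes A)$-module, and $\End_{\NChow(k)_R}(U(A)_R)\cong R\cdot[\mathrm{id}]$, the generator being the class of the regular $A$-bimodule $A$ (which is the unique simple $(A^\op\otimes A)$-module). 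The whole statement therefore reduces to computing the composition law on these generators.

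The key identity I would establish is
\[
[T]\circ[S]\;=\;i^2\cdot[\mathrm{id}_{U(A)_R}]\qquad\text{and}\qquad [S]\circ[T]\;=\;i^2\cdot[\mathrm{id}_{U(B)_R}],
\]
with $i:=\mathrm{ind}(A^\op\otimes B)$. To verify it I would base-change to a splitting field $E/k$ for $A\otimes B$: over $E$ the algebras $A_E$, $B_E$ and $(A^\op\otimes B)_E$ all become matrix algebras, so $U(A)_E\simeq U(B)_E\simeq U(E)$, and all relevant $K_0$'s reduce to $K_0(E)=\mathbb{Z}$ with ordinary multiplicative composition. Writing $A^\op\otimes B\cong M_m(D)$ with $D$ division of degree $i$, the simple $S=D^m$ has $\dim_k S=mi^2=i\cdot\deg(A)\deg(B)$; comparing with the $E$-dimension $\deg(A)\deg(B)$ of the simple $(A^\op\otimes B)_E$-module shows $[S]_E=i$, and symmetrically $[T]_E=i$. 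A parallel dimension count shows that the base-change map sends $[\mathrm{id}]\in K_0(A^\op\otimes A)$ to $1\in K_0((A^\op\otimes A)_E)$, hence is the identity $\mathbb{Z}\isoto\mathbb{Z}$. By functoriality of base change, $[T]_E\circ[S]_E=i\cdot i=i^2$ in $K_0(E)$ pulls back to the asserted $[T]\circ[S]=i^2[\mathrm{id}]$.

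Granted this formula, both parts follow. For (i), $p\mid i$ forces $i^2=0$ in $R$, hence every composition $g\circ f$ with $f\in R\cdot[S]$ and $g\in R\cdot[T]$ vanishes; this precludes $U(A)_R\simeq U(B)_R$ in $\NChow(k)_R$ and, since $\mathrm{tr}(g\circ f)=0$ for every $g$, shows that every $f$ lies in $\cN(U(A)_R,U(B)_R)$, whence $\Hom_{\NNum(k)_R}(U(A)_R,U(B)_R)=0$; the other Hom vanishes symmetrically. For (ii), $p\nmid i$ together with $R$ a field of characteristic $p$ make $i^2$ a unit of $R$, so $f:=[S]$ and $g:=i^{-2}[T]$ are mutually inverse, giving an isomorphism $U(A)_R\simeq U(B)_R$.

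The main obstacle I anticipate is pinning down the constant $i^2$ in the composition identity: both $[S]$ and $[T]$ individually contribute a factor of $i$ upon base change (the right source of the final $i^2$), but one must simultaneously verify that the generator of $K_0(A^\op\otimes A)$ is correctly identified with $[\mathrm{id}_A]$ (the class of the regular bimodule, which is the unique simple module because $A$ is a simple ring) so that the endomorphism base-change map is the identity rather than multiplication by some power of $\deg(A)$; once these normalizations are in place, the remainder of the argument is formal Brauer-theoretic bookkeeping.
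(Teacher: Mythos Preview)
Your proposal is correct and follows essentially the same approach as the paper: both reduce the statement to the bilinear composition pairing $R\times R\to R$, $(n,m)\mapsto n\cdot\mathrm{ind}(A^\op\otimes B)^2\cdot m$, and then read off (i) and (ii) from whether $p$ divides the index. The only difference is that the paper imports this pairing formula from \cite[Prop.~2.25]{Separable}, whereas you re-derive it by base-changing to a splitting field and tracking the generators; your normalization check that the diagonal bimodule $A$ is the simple $(A^\op\otimes A)$-module (so the endomorphism base-change map is the identity on $\bbZ$) is exactly the point needed to make the splitting-field computation go through.
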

\begin{proof}
As explained in the proof of \cite[Prop.~2.25]{Separable}, we have natural identifications $ \Hom_{\NChow(k)}(U(A),U(B))\simeq \bbZ$, under which the composition law (in $\NChow(k)$)
$$ \Hom(U(A),U(B)) \times \Hom(U(B),U(C)) \too \Hom(U(A),U(C))$$
corresponds to the bilinear pairing
\begin{eqnarray*}
\bbZ \times \bbZ \too \bbZ && (n,m) \mapsto n \cdot \mathrm{ind}(A^\op \otimes B) \cdot \mathrm{ind}(B^\op \otimes C) \cdot m\,.
\end{eqnarray*}
Hence, we obtain natural identifications $\Hom_{\NChow(k)_R}(U(A)_R,U(B)_R)\simeq R$. Moreover, since $\mathrm{ind}(A^\op \otimes B) = \mathrm{ind}(B^\op \otimes A)$, the composition law (in $\NChow(k)_R$)
$$ \Hom(U(A)_R, U(B)_R) \times \Hom(U(B)_R, U(A)_R) \too \Hom(U(A)_R, U(A)_R)$$
corresponds to the bilinear pairing
\begin{eqnarray}\label{eq:pairing-last}
R \times R \too R && (n,m) \mapsto n \cdot \mathrm{ind}(A^\op \otimes B)^2 \cdot m\,;
\end{eqnarray}
similarly with $A$ and $B$ replaced by $B$ and $A$, respectively. 

If $p \mid \mathrm{ind}(A^\op \otimes B)$, then the bilinear pairing \eqref{eq:pairing-last} is zero. This implies that $U(A)_R \not\simeq U(B)_R$ in $\NChow(k)_R$. Moreover, since the categorical trace of the zero endomorphism is zero, we conclude that all the elements of the $R$-modules $\Hom_{\NChow(k)_R}(U(A)_R, U(B)_R)$ and $\Hom_{\NChow(k)_R}(U(B)_R, U(A)_R)$ belong to the $\cN$-ideal. In other words, we have $\Hom_{\NNum(k)_R}(U(A)_R,U(B)_R)=0$ and also $\Hom_{\NNum(k)_R}(U(B)_R,U(A)_R)=0$. This proves item (i). 

If $p \nmid \mathrm{ind}(A^\op \otimes B)$ and $R$ is a field, then $\mathrm{ind}(A^\op \otimes B)$ is invertible in $R$. It follows then from the bilinear pairing \eqref{eq:pairing-last} that $U(A)_R\simeq U(B)_R$ in $\NChow(k)_R$. This proves item (ii).
\end{proof}

Let $\cA,\cC \subseteq \cB$ be smooth proper pretriangulated dg categories for which the triangulated subcategories $\dgHo(\cA),\dgHo(\cC)\subseteq \dgHo(\cB)$ are admissible and induce a semi-orthogonal decomposition $\dgHo(\cB)=\langle \dgHo(\cA), \dgHo(\cC)\rangle$. As proved in \cite[Thm.~6.3]{Additive}, the inclusion dg functors $\cA, \cC\subseteq \cB$ induce an isomorphism $U(\cA) \oplus U(\cC) \simeq U(\cB)$ in the additive category $\NChow(k)$. Consequently, if we denote by $K_0(\NChow(k))$ the Grothendieck ring of the symmetric monoidal additive $\NChow(k)$, we obtain a well-defined ring homomorphism
\begin{eqnarray*}
\cP\cT(k) \too K_0(\NChow(k)) && [\cA] \mapsto [U(\cA)]\,.
\end{eqnarray*}
By precomposing it with the isomorphism $K_0^{(2)}(k)\simeq \cP\cT(k)$ of Theorem \ref{thm:main} and with  \eqref{eq:canonical}, we hence obtain the canonical map
\begin{eqnarray}\label{eq:canonical2}
\mathrm{dBr}(k) \too K_0(\NChow(k)) && [A] \mapsto [U(A)]\,.
\end{eqnarray}
The proof of Theorem \ref{thm:injective1} follows now from the following result:
\begin{theorem}\label{thm:injective11}
Let $k$ be a noetherian commutative ring of characteristic zero, resp. positive prime characteristic $p>0$, and $A$ a dg Azumaya algebra which is not Morita equivalent to an ordinary Azumaya algebra. If  $K_0(k)_\bbQ\simeq \bbQ$, resp. $K_0(k)_{\bbF_p}\simeq \bbF_p$, then the image of $[A]$ under the canonical map \eqref{eq:canonical2} is non-trivial. Moreover, when $k$ is of characteristic zero, resp. positive prime characteristic $p>0$, this non-trivial image is different from the images of the ordinary Azumaya algebras, resp. of the ordinary Azumaya algebras whose index is not a multiple~of~$p$.
\end{theorem}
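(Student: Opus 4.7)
The plan is to factor the canonical map \eqref{eq:canonical2} through the Grothendieck ring of noncommutative numerical motives, where the semi-simplicity supplied by Theorem~\ref{thm:semi} converts questions about $K_0$-classes into questions about isomorphism classes of objects. Set $R := \bbQ$ in the characteristic-zero case and $R := \bbF_p$ in the positive prime characteristic $p>0$ case, and consider the composition
$$K_0(\NChow(k)) \too K_0(\NChow(k)_R) \too K_0(\NNum(k)_R),$$
induced by the $R$-linearization \eqref{eq:func3} and the quotient modulo the $\otimes$-ideal~$\cN$. The hypothesis $K_0(k)_R\simeq R$ is precisely what is required for Theorem~\ref{thm:semi} to apply, so $\NNum(k)_R$ is abelian semi-simple; consequently $K_0(\NNum(k)_R)$ is the free abelian group on isomorphism classes of simple objects, and two classes there agree if and only if the underlying objects are isomorphic.

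Next I would observe that the noncommutative motive $U(D)_R$ attached to every dg Azumaya algebra $D$ is an invertible object of the rigid symmetric monoidal category $\NChow(k)_R$, hence also of $\NNum(k)_R$. For any invertible $L$ in a rigid symmetric monoidal category one has $\End(L)\simeq \End(\mathbf{1})$, which here equals $K_0(k)_R\simeq R$ (a field); so every such $U(D)_R$ is a simple object of $\NNum(k)_R$. Combining the monoidality of $U(-)_R$ with the fact that $\Br(k)\subseteq \dBr(k)$ is a subgroup --- so that the tensor product of a non-ordinary dg Azumaya algebra with an ordinary one remains non-ordinary --- the theorem reduces to the following \emph{non-triviality claim}: for every dg Azumaya algebra $A'$ which is not Morita equivalent to an ordinary Azumaya algebra, $U(A')_R\not\simeq U(k)_R$ in $\NNum(k)_R$.

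Proposition~\ref{prop:dgAzumaya} already gives $U(A')_R\not\simeq U(k)_R$ in $\NChow(k)_R$, and the remaining task is to verify that the passage to the $\cN$-quotient does not identify these two invertible objects. Any hypothetical isomorphism in $\NNum(k)_R$ would lift to morphisms $f,g$ in $\NChow(k)_R$ with $g\circ f = \lambda\cdot \id$ for some scalar $\lambda\in R=\End_{\NChow(k)_R}(U(A')_R)$ satisfying $(\lambda-1)\id\in \cN$; testing the negligibility condition against $\id$ yields $(\lambda-1)\dim(U(A')_R)=0$, and because the categorical dimension of an invertible object is a unit of $R$ (from $\dim(L)\dim(L^\vee)=\dim(\mathbf{1})=1$), this forces $\lambda=1$. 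The symmetric argument applied to $f\circ g$ then produces a genuine isomorphism in $\NChow(k)_R$, contradicting Proposition~\ref{prop:dgAzumaya}. The main obstacle is precisely this lifting step --- rigorously verifying that a putative isomorphism between invertible objects with endomorphism ring the field $R$ in the quotient descends from an honest isomorphism upstairs; once past it, the theorem is a formal assembly of Theorem~\ref{thm:semi}, rigidity in symmetric monoidal categories, and the subgroup structure of $\Br(k)$ inside $\dBr(k)$, with the index restriction in the positive characteristic case reflecting the scope of Proposition~\ref{prop:dgAzumaya} over $\bbF_p$-coefficients.
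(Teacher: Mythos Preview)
Your proposal is correct and follows essentially the same strategy as the paper: pass to $\NNum(k)_R$ (with $R=\bbQ$ or $\bbF_p$), invoke Theorem~\ref{thm:semi} to obtain cancellation in $K_0$, and combine Proposition~\ref{prop:dgAzumaya} with an invertible-object argument to show that $U(A')_R\not\simeq U(k)_R$ survives the passage from $\NChow(k)_R$ to $\NNum(k)_R$. Your lifting step via the categorical-dimension trick and the paper's version (the full quotient induces a surjection of rings $R\twoheadrightarrow\End_{\NNum(k)_R}(U(A)_R)$, whence $\cN$ vanishes on endomorphisms) are two phrasings of the same observation.

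Where you genuinely diverge is in the treatment of the second claim. The paper shows directly that $U(B)_R\simeq U(k)_R$ in $\NNum(k)_R$ for $B$ an ordinary Azumaya algebra, citing the external result \cite[Cor.~B.14]{Separable}; this is exactly where the hypothesis $p\nmid\ind(B)$ enters in positive characteristic, since that corollary needs the rank of $B$ to be invertible in $\bbF_p$. You instead multiply a hypothetical equality $[U(A)]=[U(B)]$ by the unit $[U(B^\op)]$ in the ring $K_0(\NChow(k))$ and reduce to the first claim for $A':=A\otimes^{\bf L}B^\op$, which stays non-ordinary because $\Br(k)\subseteq\dBr(k)$ is a subgroup. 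This is tidier, avoids the external citation, and in fact establishes the second claim \emph{without} any index restriction in characteristic $p$. Consequently your closing remark that the index restriction ``reflects the scope of Proposition~\ref{prop:dgAzumaya} over $\bbF_p$-coefficients'' is a misattribution: that proposition carries no such restriction, and neither does your own argument---the restriction in the paper's statement is an artifact of its chosen route through \cite[Cor.~B.14]{Separable}, which your reduction bypasses.
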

\begin{proof}
Similarly to ordinary Azumaya algebras (see \cite[Lem.~8.10]{JIMJ}), we have the following equivalence of symmetric monoidal triangulated categories 
\begin{eqnarray*}
\cD_c(k) \stackrel{\simeq}{\too} \cD_c(A^\op \otimes^{\bf L} A) && M \mapsto M\otimes^{\bf L} A\,,
\end{eqnarray*}
where the symmetric monoidal structure on $\cD_c(k)$, resp. $\cD_c(A^\op \otimes^{\bf L} A)$, is induced by $-\otimes^{\bf L}-$, resp. $-\otimes^{\bf L}_{A}-$. Consequently, we obtain an induced ring isomorphism
\begin{equation}\label{eq:induced1}
\End_{\NChow(k)}(U(k)) \stackrel{\sim}{\too} \End_{\NChow(k)}(U(A))\,.
\end{equation}
Let us prove the first claim. Assume that $k$ is of characteristic zero; the proof of the cases where $k$ is of positive prime characteristic $p>0$ is similar. By definition of the category of noncommutative Chow motives, the left-hand side of \eqref{eq:induced1} is given by the Grothendieck ring $K_0(k)$. Therefore, the assumption $K_0(k)_\bbQ\simeq~\bbQ$ combined with the isomorphism \eqref{eq:induced1} implies that $\End_{\NChow(k)_\bbQ}(U(k)_\bbQ)\simeq \bbQ$ and $\End_{\NChow(k)_\bbQ}(U(A)_\bbQ)\simeq \bbQ$. By construction of the category of noncommutative numerical motives, we have $\End_{\NNum(k)_\bbQ}(U(k)_\bbQ)\simeq \bbQ$. Using the fact that $U(A)_\bbQ \in \NChow(k)_\bbQ$ is a $\otimes$-invertible object and that the $\bbQ$-linear quotient functor $\NChow(k)_\bbQ \to \NNum(k)_\bbQ$ is symmetric monoidal, we hence conclude that $\End_{\NNum(k)_\bbQ}(U(A)_\bbQ)$ is also isomorphic to $\bbQ$. This gives rise to the implication:
\begin{equation}\label{eq:implication}
U(A)_\bbQ\not\simeq U(k)_\bbQ\,\,\mathrm{in}\,\,\NChow(k)_\bbQ \Rightarrow U(A)_\bbQ\not\simeq U(k)_\bbQ\,\,\mathrm{in}\,\,\NNum(k)_\bbQ\,.
\end{equation}
Note that since the quotient functor $\NChow(k)_\bbQ \to \NNum(k)_\bbQ$ is full, implication \eqref{eq:implication} is equivalent to the fact that every morphism $U(A)_\bbQ \to U(k)_\bbQ$ in $\NChow(k)_\bbQ$ which becomes invertible in $\NNum(k)_\bbQ$ is already invertible in $\NChow(k)_\bbQ$.

Recall from Proposition \ref{prop:dgAzumaya} that $U(A)_\bbQ\not\simeq U(k)_\bbQ$ in $\NChow(k)_\bbQ$ when $k$ is noetherian. Making use of \eqref{eq:implication}, we hence conclude that $U(A)_\bbQ\not\simeq U(k)_\bbQ$ in $\NNum(k)_\bbQ$.
%
By definition, we have $[U(A)]=[U(k)]$ in the Grothendieck ring $K_0(\NChow(k))$ if and only if the following condition holds: 
\begin{equation}\label{eq:iso-key}
\exists\, N\!\!M \in \NChow(k)\,\,\mathrm{such}\,\,\mathrm{that}\,\, U(A) \oplus N\!\!M \simeq U(k) \oplus N\!\!M\,.
\end{equation}
Thanks to Theorem \ref{thm:semi}, the category $\NNum(k)_\bbQ$ is abelian semi-simple. Consequently, it satisfies the cancellation property with respect to direct sums. Therefore, if condition \eqref{eq:iso-key} holds, one would conclude that $U(A)_\bbQ\simeq U(k)_\bbQ$ in $\NNum(k)_\bbQ$, which is a contradiction. This finishes the proof of the first claim. 

Let us now prove the second claim. Note first that by combining Proposition \ref{prop:dgAzumaya} with implication \eqref{eq:implication}, we conclude that
\begin{eqnarray}\label{eq:contra}
& U(A)_\bbQ \not \simeq U(k)_\bbQ\,\,\mathrm{in}\,\,\mathrm{NNum}(k)_\bbQ & \mathrm{resp.}\,\,U(A)_{\bbF_p}\not\simeq U(k)_{\bbF_p} \,\,\mathrm{in}\,\,\mathrm{NNum}(k)_{\bbF_p}\,.
\end{eqnarray}
Let $B$ be an ordinary Azumaya $k$-algebra, resp. an ordinary Azumaya $k$-algebra whose index is not a multiple of $p$. In the latter case, by definition of index, we can assume without loss of generality that the rank of $B$ is not a multiple of $p$. We have $[U(A)]=[U(B)]$ in the Grothendieck ring $K_0(\NChow(k))$ if and only if
\begin{equation}\label{eq:cond-A-B}
\exists\, N\!\!M \in \NChow(k)\,\,\mathrm{such}\,\,\mathrm{that}\,\, U(A) \oplus N\!\!M \simeq U(B) \oplus N\!\!M\,.
\end{equation}
Thanks to Theorem \ref{thm:semi}, the category $\NNum(k)_\bbQ$, resp. $\NNum(k)_{\bbF_p}$, is abelian semi-simple. Consequently, it satisfies the cancellation property with respect to direct sums. Therefore, if condition \eqref{eq:cond-A-B} holds, one would conclude that $U(A)_\bbQ\simeq U(B)_\bbQ$ in $\NNum(k)_\bbQ$, resp. $U(A)_{\bbF_p}\simeq U(B)_{\bbF_p}$ in $\NNum(k)_{\bbF_p}$. On the one hand, \cite[Cor.~B.14]{Separable} implies that $U(B)_\bbQ \simeq U(k)_\bbQ$ in $\NNum(k)_\bbQ$. This contradicts the left-hand side of \eqref{eq:contra}. On the other hand, since the rank of $B$ is invertible in $\bbF_p$, \cite[Cor.~B.14]{Separable} implies that $U(B)_{\bbF_p}\simeq U(k)_{\bbF_p}$. This contradicts the right-hand side of \eqref{eq:contra}. The proof of the second claim is then finished.
\end{proof}
\begin{proposition}\label{prop:Br-graded}
Let $k$ be a field and $R$ a field of positive characteristic $p>0$. In this case, the category $\mathrm{CSA}(k)^\oplus_R$ (see Notation \ref{not:CSA}) is equivalent to the category of $\mathrm{Br}(k)\{p\}$-graded finite dimensional $R$-vector spaces.
\end{proposition}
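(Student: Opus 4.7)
The plan is to use Proposition~\ref{prop:index} to classify the isomorphism classes and morphism spaces in $\mathrm{CSA}(k)_R$, and then to package the resulting data as an additive equivalence with graded vector spaces. Exploiting the primary decomposition $\mathrm{Br}(k)=\bigoplus_{q\,\text{prime}}\mathrm{Br}(k)\{q\}$ of the torsion abelian group $\mathrm{Br}(k)$, let $\pi\colon \mathrm{Br}(k)\twoheadrightarrow \mathrm{Br}(k)\{p\}$ denote the projection onto the $p$-primary component. Using the classical fact that the period and the index of a central simple $k$-algebra share the same prime factors, together with the identity $[A^{\op}\otimes B]=[B]-[A]$ in $\mathrm{Br}(k)$, one obtains the equivalence
\begin{equation*}
p\mid \mathrm{ind}(A^{\op}\otimes B) \iff \pi([A])\neq \pi([B]).
\end{equation*}
Combining this with Proposition~\ref{prop:index}(ii) yields $U(A)_R\simeq U(B)_R$ in $\mathrm{NChow}(k)_R$, and hence in $\mathrm{NNum}(k)_R$, whenever $\pi([A])=\pi([B])$; combining it with Proposition~\ref{prop:index}(i) yields $\Hom_{\mathrm{NNum}(k)_R}(U(A)_R,U(B)_R)=0$ whenever $\pi([A])\neq \pi([B])$.

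Next I would pin down $\End_{\mathrm{NNum}(k)_R}(U(A)_R)$. Specializing the bilinear pairing from the proof of Proposition~\ref{prop:index} to $A=B=C$ and using the fact that $A^{\op}\otimes_k A\simeq \End_k(A)$ is split (so $\mathrm{ind}(A^{\op}\otimes A)=1$), composition on $\End_{\mathrm{NChow}(k)_R}(U(A)_R)\simeq R$ is ordinary multiplication. The categorical dimension of $U(A)_R$ agrees with the Euler characteristic of $\HH(A)\simeq Z(A)=k$ concentrated in degree zero, so the trace on the endomorphism ring is the identity of $R$; the pairing $\End\times \End\to R$ is therefore non-degenerate and the $\mathcal{N}$-ideal vanishes at $U(A)_R$. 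Consequently $\End_{\mathrm{NNum}(k)_R}(U(A)_R)\simeq R$, and combined with the previous paragraph, $\Hom_{\mathrm{NNum}(k)_R}(U(A)_R,U(B)_R)$ is a free $R$-module of rank $1$ when $\pi([A])=\pi([B])$ and $0$ otherwise.

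Finally I would construct an $R$-linear functor $\Phi$ from $\mathrm{CSA}(k)_R$ to the category $\mathrm{gVect}_R$ of $\mathrm{Br}(k)\{p\}$-graded finite dimensional $R$-vector spaces, sending $U(A)_R$ to the one-dimensional graded space $R$ concentrated in degree $\pi([A])$ and acting on Hom-spaces via the canonical identifications established above. Extending additively produces a functor $\Phi\colon \mathrm{CSA}(k)^{\oplus}_R\to \mathrm{gVect}_R$ which is fully faithful by the Hom-computation of step two (the block structure of matrix morphisms between direct sums matches the grading constraint on graded linear maps) and essentially surjective because every object of $\mathrm{gVect}_R$ is a finite direct sum of one-dimensional graded pieces, and every $\alpha\in \mathrm{Br}(k)\{p\}\subseteq \mathrm{Br}(k)$ is realized as the Brauer class of some central simple $k$-algebra. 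The most delicate step is the second: verifying that the categorical trace trivializes the $\mathcal{N}$-ideal, so that Proposition~\ref{prop:index}(ii) is promoted from an isomorphism statement in $\mathrm{NChow}(k)_R$ to a precise rank-one Hom-computation in $\mathrm{NNum}(k)_R$; everything else is bookkeeping propagated from Proposition~\ref{prop:index}.
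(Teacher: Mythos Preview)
Your proof is correct and follows essentially the same route as the paper: both reduce everything to Proposition~\ref{prop:index}, using part~(i) to kill the Hom-spaces when the $p$-parts of the Brauer classes differ and part~(ii) to identify objects when they agree, and then package the result as an equivalence with graded vector spaces. The one substantive difference is your computation of $\End_{\NNum(k)_R}(U(A)_R)\simeq R$: you argue directly that the categorical trace is nondegenerate via $HH(A)\simeq k$, whereas the paper (pointing back to the proof of Theorem~\ref{thm:injective11}) uses the $\otimes$-invertibility of $U(A)$ together with the symmetric monoidality of the quotient $\NChow(k)_R\to\NNum(k)_R$ to transport the computation to the unit object; both arguments are short and valid, and your organization via the projection $\pi\colon\mathrm{Br}(k)\to\mathrm{Br}(k)\{p\}$ is a slightly cleaner way to phrase the same case analysis.
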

\begin{proof}
Let $A$ be a central simple $k$-algebra. Similarly to the proof of Theorem \ref{thm:injective11}, we have a ring  isomorphism $\mathrm{End}_{\mathrm{CSA}(k)_R}(U(A)_R)\simeq R$.

Let $A$ and $B$ be central simple $k$-algebras such that $[A], [B] \in \mathrm{Br}(k)\{p\}$ and $[A]\neq [B]$. Since $\mathrm{ind}(A^\op \otimes B) \mid \mathrm{ind}(A^\op) \cdot \mathrm{ind}(B)$ and $[A]\neq [B]$, we have $p \mid \mathrm{ind}(A^\op \otimes~B)$. Therefore, Proposition \ref{prop:index}(i) implies that $\Hom_{\mathrm{CSA}(k)_R}(U(A)_R,U(B)_R) = 0$ and also that $\Hom_{\mathrm{CSA}(k)_R}(U(B)_R,U(A)_R)= 0$.

Let $A$ be a central simple $k$-algebra such that $[A] \in \bigoplus_{q\neq p} \mathrm{Br}(k)\{q\}$. Then, Proposition \ref{prop:index}(ii) implies that $U(A)_R \simeq U(k)_R$ in $\mathrm{CSA}(k)_R$.

The proof follows now automatically from the combination of the above facts.
\end{proof}
The proof of Theorem \ref{thm:injective2} follows now from the following result:
\begin{theorem}\label{thm:injective22}
Let $k$ be a field of characteristic zero. In this case, the canonical map \eqref{eq:canonical2} is injective.
\end{theorem}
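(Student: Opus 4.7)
The plan is to pass to noncommutative numerical motives with positive-characteristic coefficients and then force a contradiction via a dimension count on morphism spaces. Since $k$ is a field of characteristic zero, every dg Azumaya algebra is Morita equivalent to an ordinary Azumaya algebra by \cite[Prop.~2.12]{Azumaya}, so $\mathrm{dBr}(k) = \mathrm{Br}(k)$, and it suffices to show that any two central simple $k$-algebras $A$ and $B$ with $[U(A)] = [U(B)]$ in $K_0(\NChow(k))$ satisfy $[A] = [B]$ in $\mathrm{Br}(k)$. Because $A \otimes_k A^\op \simeq M_n(k)$ and $U$ is Morita-invariant, $[U(A)]$ is a unit in the ring $K_0(\NChow(k))$ with inverse $[U(A^\op)]$; multiplying by $[U(A^\op)]$ thus reduces the task to the following assertion: if $C$ is a central simple $k$-algebra with $[U(C)] = [U(k)]$ in $K_0(\NChow(k))$, then $[C] = 0$ in $\mathrm{Br}(k)$.

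I argue by contradiction. Assume $[C] \neq 0$, so that $\mathrm{ind}(C) > 1$, and fix a prime $p$ dividing $\mathrm{ind}(C)$. The hypothesis on $C$ unpacks as an object $NM \in \NChow(k)$ together with an isomorphism $U(k) \oplus NM \simeq U(C) \oplus NM$. Applying the symmetric monoidal functor $\NChow(k) \to \NNum(k)_{\bbF_p}$ produces an isomorphism $\overline{U(k)} \oplus \overline{NM} \simeq \overline{U(C)} \oplus \overline{NM}$ in $\NNum(k)_{\bbF_p}$, to which I apply the additive functor $\Hom_{\NNum(k)_{\bbF_p}}(\overline{U(k)}, -)$. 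The endomorphism ring $\End_{\NNum(k)_{\bbF_p}}(\overline{U(k)}) \simeq \bbF_p$ is the identity special case of the computation opening the proof of Proposition~\ref{prop:Br-graded}; and since $p \mid \mathrm{ind}(C) = \mathrm{ind}(k^{\op} \otimes C)$, Proposition~\ref{prop:index}(i) gives $\Hom_{\NNum(k)_{\bbF_p}}(\overline{U(k)}, \overline{U(C)}) = 0$. Writing $V := \Hom_{\NNum(k)_{\bbF_p}}(\overline{U(k)}, \overline{NM})$, the resulting isomorphism of $\bbF_p$-vector spaces therefore reads $\bbF_p \oplus V \simeq V$.

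The final ingredient, which I expect to be the main obstacle, is the finite-dimensionality of $V$: once that is in hand, $\bbF_p \oplus V \simeq V$ immediately yields the absurd equality $\dim V + 1 = \dim V$ and concludes the proof. By construction of $\NChow(k)$, the object $NM$ is a direct summand of $U(\cA)$ for some smooth proper dg category $\cA$, so $V$ sits inside (a quotient of a direct summand of) $\Hom_{\NChow(k)_{\bbF_p}}(U(k)_{\bbF_p}, U(\cA)_{\bbF_p}) \simeq K_0(\cA)_{\bbF_p}$. Finite-dimensionality of $V$ thus reduces to the fact that $K_0(\cA)$ is a finitely generated abelian group for every smooth proper dg category $\cA$ over a field, a result I would invoke from the theory of noncommutative motives (or derive from a presentation $\cA \simeq \perf_{\dg}(E)$ with $E$ a smooth proper dg algebra). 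Granted this finiteness, the dimension count supplies the required contradiction and hence the injectivity of the canonical map~\eqref{eq:canonical2}.
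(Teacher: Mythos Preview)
Your overall architecture matches the paper's: reduce to a cancellation problem in $\NNum(k)_{\bbF_p}$ for a prime $p$ dividing the relevant index, use the vanishing of Proposition~\ref{prop:index}(i), and then appeal to a finiteness statement to force a contradiction. Your endgame is in fact cleaner than the paper's: you apply the functor $\Hom_{\NNum(k)_{\bbF_p}}(\overline{U(k)},-)$ directly and reduce to $\bbF_p\oplus V\simeq V$, whereas the paper goes through a decomposition of $N\!M_{\bbF_p}$ (Lemma~\ref{lem:key}) and a cancellation inside the subcategory $\mathrm{CSA}(k)_{\bbF_p}^{\oplus}$ via Proposition~\ref{prop:Br-graded}. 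Your reduction to $C=A^\op\otimes B$ via the invertibility of $[U(A)]$ is also a pleasant simplification the paper does not make.

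There is, however, a genuine gap at the finiteness step. The assertion that $K_0(\cA)$ is a finitely generated abelian group for every smooth proper dg category $\cA$ over a field is \emph{false}. Already for an elliptic curve $E$ over $\bbC$ (or over $\overline{\bbQ}$) one has $K_0(\perf_\dg(E))\supset \mathrm{Pic}^0(E)\simeq E(k)$, which is not finitely generated; for surfaces with $p_g>0$ Mumford's theorem shows that $\mathrm{CH}_0$ and hence $K_0$ is infinite-dimensional even after tensoring with $\bbQ$. So the route through $\Hom_{\NChow(k)_{\bbF_p}}(U(k)_{\bbF_p},U(\cA)_{\bbF_p})\simeq K_0(\cA)_{\bbF_p}$ does not, by itself, give $\dim_{\bbF_p}V<\infty$.

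The correct finiteness input lives at the \emph{numerical} level, and this is precisely what the paper uses. In the proof of Lemma~\ref{lem:key} the paper invokes \cite[Thm.~1.2]{Separable} to get that $\Hom_{\NNum(k)}(U(A),N\!M)$ is a finitely generated abelian group when $k$ has characteristic zero, and combines this with the surjection $\Hom_{\NNum(k)}(U(A),N\!M)\otimes_\bbZ\bbF_p\twoheadrightarrow \Hom_{\NNum(k)_{\bbF_p}}(U(A)_{\bbF_p},N\!M_{\bbF_p})$ coming from \cite[Prop.~1.4.1]{Brug}. Plugging this same pair of facts into your argument (with $A=k$) immediately yields $\dim_{\bbF_p}V<\infty$ and your dimension count then goes through unchanged. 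In short: keep your Hom-functor argument, but replace the appeal to finite generation of $K_0(\cA)$ by the finiteness of numerical Hom-groups in characteristic zero.
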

\begin{proof}
Let $A$ and $B$ be two central simple $k$-algebras such that $[A]\neq [B]$ in $\mathrm{Br}(k)$. Recall that $\mathrm{ind}(A^\op \otimes B) =1$ if and only if $[A]=[B]$. Therefore, let us choose a prime number $p$ such that $p\mid \mathrm{ind}(A^\op \otimes B)$. Thanks to Proposition \ref{prop:index}(i), we have $U(A)_{\bbF_p}\not\simeq U(B)_{\bbF_p}$ in $\NChow(k)_{\bbF_p}$. Consequently, similarly to implication \eqref{eq:implication}, we have $U(A)_{\bbF_p}\not\simeq U(B)_{\bbF_p}$ in $\NNum(k)_{\bbF_p}$. By definition, we have $[U(A)]=[U(B)]$ in the Grothendieck ring $K_0(\NChow(k))$ if and only if the condition holds:
\begin{equation}\label{eq:cond-last}
\exists\, N\!\!M \in \NChow(k)\,\,\mathrm{such}\,\,\mathrm{that}\,\, U(A) \oplus N\!\!M \simeq U(B) \oplus N\!\!M\,.
\end{equation}
Thanks to Lemma \ref{lem:key} below, if condition \eqref{eq:cond-last} holds, then there exist non-negative integers $n, m \geq 0$ and a noncommutative numerical motive $N\!\!M'$~such~that 
\begin{equation}\label{eq:iso-global}
\oplus^{n+1}_{i=1} U(A)_{\bbF_p} \oplus \oplus^m_{j=1} U(B)_{\bbF_p} \oplus N\!\!M' \simeq \oplus^n_{i=1} U(A)_{\bbF_p} \oplus \oplus^{m+1}_{j=1} U(B)_{\bbF_p} \oplus N\!\!M'
\end{equation}
in $\NNum(k)_{\bbF_p}$. Note that the composition bilinear pairing (in $\NNum(k)_{\bbF_p}$)
\begin{equation}\label{eq:pairing-1}
\Hom(U(A)_{\bbF_p}, N\!\!M') \times \Hom(N\!\!M', U(A)_{\bbF_p}) \too \Hom(U(A)_{\bbF_p}, U(A)_{\bbF_p})
\end{equation}
is zero; similarly for $U(B)_{\bbF_p}$. This follows from the fact that the right-hand side of \eqref{eq:pairing-1} identifies with $\bbF_p$, from the fact that the category $\NNum(k)_{\bbF_p}$ is $\bbF_p$-linear, and from the fact that the noncommutative numerical motive $N\!\!M'$ does not contains $U(A)_{\bbF_p}$ as a direct summand. The composition bilinear pairing (in $\NNum(k)_{\bbF_p}$)
\begin{equation}\label{eq:pairing-2}
\Hom(U(A)_{\bbF_p}, N\!\!M')\times \Hom(N\!\!M', U(B)_{\bbF_p})\too \Hom(U(A)_{\bbF_p}, U(B)_{\bbF_p})
\end{equation}
is also zero; similarly with $A$ and $B$ replaced by $B$ and $A$, respectively. This follows automatically from the fact that the right-hand side of \eqref{eq:pairing-2} is zero; see Proposition \ref{prop:index}(i). Now, note that the triviality of the pairings \eqref{eq:pairing-1}-\eqref{eq:pairing-2} implies that the isomorphism \eqref{eq:iso-global} restricts to an isomorphism
$$
U(A)_{\bbF_p}\oplus \oplus^n_{i=1} U(A)_{\bbF_p} \oplus \oplus^m_{j=1} U(B)_{\bbF_p} \simeq U(B)_{\bbF_p}\oplus \oplus^n_{i=1} U(A)_{\bbF_p} \oplus \oplus^m_{j=1} U(B)_{\bbF_p}
$$
in the category $\mathrm{CSA}(k)_{\bbF_p}^\oplus \subset \NNum(k)_{\bbF_p}$. Since $\mathrm{CSA}(k)_{\bbF_p}^\oplus$ is equivalent to the category of $\mathrm{Br}(k)\{p\}$-graded finite dimensional $\bbF_p$-vector spaces (see Proposition \ref{prop:Br-graded}), it satisfies the cancellation property with respect to direct sums. Consequently, we conclude from the preceding isomorphism that $U(A)_{\bbF_p}\simeq U(B)_{\bbF_p}$ in $\NNum(k)_{\bbF_p}$, which is a contradiction. This finishes the proof.
\end{proof}
\begin{lemma}\label{lem:key}
There exist non-negative integers $n,m \geq 0$ and a noncommutative numerical motive $N\!\!M' \in \NNum(k)_{\bbF_p}$ such that:
\begin{itemize}
\item[(i)] We have $N\!\!M_{\bbF_p}\simeq \oplus^n_{i=1} U(A)_{\bbF_p} \oplus \oplus _{j=1}^m U(B)_{\bbF_p} \oplus N\!\!M'$ in $\NNum(k)_{\bbF_p}$.
\item[(ii)] The noncommutative numerical motive $N\!\!M'$ does not contains $U(A)_{\bbF_p}$ neither $U(B)_{\bbF_p}$ as direct summands.
\end{itemize}
\end{lemma}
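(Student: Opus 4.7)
The plan is to extract maximal copies of $U(A)_{\bbF_p}$ and $U(B)_{\bbF_p}$ from $N\!\!M_{\bbF_p}$ by an iterative splitting argument. The argument rests on three ingredients: the endomorphism rings of $U(A)_{\bbF_p}$ and $U(B)_{\bbF_p}$ in $\NNum(k)_{\bbF_p}$ are equal to the field $\bbF_p$; the relevant $\Hom$-spaces in $\NNum(k)_{\bbF_p}$ are finite-dimensional over $\bbF_p$; and the category $\NNum(k)_{\bbF_p}$ is idempotent complete by construction.

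First, I would compute the endomorphism rings. The ring isomorphism \eqref{eq:induced1} applies equally to any central simple $k$-algebra, and since $k$ is a field one has $\End_{\NChow(k)}(U(k)) \simeq K_0(k) \simeq \bbZ$; tensoring with $\bbF_p$ then yields $\End_{\NChow(k)_{\bbF_p}}(U(A)_{\bbF_p}) \simeq \bbF_p$, and the surjective quotient map to $\End_{\NNum(k)_{\bbF_p}}(U(A)_{\bbF_p})$ is nonzero (it sends $1$ to $1$, and $U(A)_{\bbF_p}$ is $\otimes$-invertible, hence not a zero object), forcing the target to equal $\bbF_p$; the same holds for $U(B)_{\bbF_p}$. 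For the second ingredient, note that the $\Hom$-spaces in $\NChow(k)$ between smooth proper objects are $K_0$-groups of tensor products of smooth proper dg categories and are therefore finitely generated abelian groups, so after tensoring with $\bbF_p$ and passing to the quotient $\NNum(k)_{\bbF_p}$ they remain finite-dimensional over $\bbF_p$.

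Next, I would perform the iterative splitting. Any decomposition $N\!\!M_{\bbF_p} \simeq \oplus^{n'}_{i=1} U(A)_{\bbF_p} \oplus X$ in $\NNum(k)_{\bbF_p}$ yields an $\bbF_p$-linear embedding $\bbF_p^{\oplus n'} \hookrightarrow \Hom_{\NNum(k)_{\bbF_p}}(U(A)_{\bbF_p}, N\!\!M_{\bbF_p})$, so the set of such $n'$ is bounded. Choose the maximal value $n$ and fix a corresponding decomposition $N\!\!M_{\bbF_p} \simeq \oplus^n_{i=1} U(A)_{\bbF_p} \oplus N\!\!M''$; by maximality, $N\!\!M''$ admits no direct summand isomorphic to $U(A)_{\bbF_p}$. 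Iterating the same construction with $U(B)_{\bbF_p}$ applied to $N\!\!M''$ produces a decomposition $N\!\!M'' \simeq \oplus^m_{j=1} U(B)_{\bbF_p} \oplus N\!\!M'$ in which $N\!\!M'$ admits no $U(B)_{\bbF_p}$ summand.

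Finally, I would verify that $N\!\!M'$ admits no $U(A)_{\bbF_p}$ summand either. Indeed, if $N\!\!M' \simeq U(A)_{\bbF_p} \oplus Y$, then one would have $N\!\!M'' \simeq U(A)_{\bbF_p} \oplus (\oplus^m_{j=1} U(B)_{\bbF_p} \oplus Y)$, contradicting the maximality step that produced $N\!\!M''$. The main conceptual hurdle is ensuring the extraction is well-founded; once the finite-dimensionality of $\Hom_{\NNum(k)_{\bbF_p}}(U(A)_{\bbF_p}, N\!\!M_{\bbF_p})$ is in place, the remainder is a standard Krull--Schmidt-style manipulation enabled by the fact that an object whose endomorphism ring is a field is indecomposable and admits no nontrivial idempotents.
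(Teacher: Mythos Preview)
Your overall strategy---iteratively splitting off copies of $U(A)_{\bbF_p}$ and $U(B)_{\bbF_p}$, with termination guaranteed by finite-dimensionality of the relevant $\Hom$-space---matches the paper's exactly, and your organization (first extract all $U(A)_{\bbF_p}$, then all $U(B)_{\bbF_p}$, then check no $U(A)_{\bbF_p}$ reappears) is in fact slightly cleaner than the paper's.

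There is, however, a genuine gap in your justification of the second ingredient. You assert that the $\Hom$-spaces in $\NChow(k)$ are $K_0$-groups of smooth proper dg categories ``and are therefore finitely generated abelian groups.'' This is false in general: already for a smooth projective curve $C$ of positive genus over $\bbC$ one has $K_0(C)\simeq \bbZ\oplus\Pic(C)$, and $\Pic^0(C)$ is uncountable. Since $N\!\!M$ is an arbitrary noncommutative Chow motive (a retract of $U(\cA)$ for some smooth proper $\cA$), there is no reason for $K_0(A^\op\otimes^{\bf L}\cA)$ to be finitely generated, and your finite-dimensionality claim does not follow.

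The paper repairs this by working in $\NNum(k)$ rather than $\NChow(k)$: it invokes \cite[Thm.~1.2]{Separable} to conclude that $\Hom_{\NNum(k)}(U(A),N\!\!M)$ is a finitely generated abelian group when $k$ is a field of characteristic zero (this is a nontrivial theorem, not a general fact about smooth proper categories), and then uses the surjection
\[
\Hom_{\NNum(k)}(U(A),N\!\!M)\otimes_\bbZ\bbF_p \twoheadrightarrow \Hom_{\NNum(k)_{\bbF_p}}(U(A)_{\bbF_p},N\!\!M_{\bbF_p})
\]
from \cite[Prop.~1.4.1]{Brug} to deduce that the target is finite-dimensional over $\bbF_p$. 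Once you replace your second ingredient with this argument, the rest of your proof goes through verbatim.
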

\begin{proof}
Recall that the category $\NNum(k)_{\bbF_p}$ is idempotent complete. Therefore, by inductively splitting the (possible) direct summands $U(A)_{\bbF_p}$ and $U(B)_{\bbF_p}$ of the noncommutative numerical motive $N\!\!M_{\bbF_p}$, we obtain an isomorphism
$$ N\!\!M_{\bbF_p} \simeq U(A)_{\bbF_p} \oplus \cdots \oplus U(A)_{\bbF_p} \oplus U(B)_{\bbF_p} \oplus \cdots \oplus U(B)_{\bbF_p} \oplus N\!\!M'$$
in $\NNum(k)_{\bbF_p}$, with $N\!\!M'$ satisfying condition (ii). We claim that the number of copies of $U(A)_{\bbF_p}$ and $U(B)_{\bbF_p}$ is finite; note that this concludes the proof. We will focus ourselves in the case $U(A)_{\bbF_p}$; the proof of the case $U(B)_{\bbF_p}$ is similar. Suppose that the number of copies of $U(A)_{\bbF_p}$ is infinite. Since we have natural isomorphisms
\begin{equation}\label{eq:endos}
\Hom_{\NNum(k)_{\bbF_p}}(U(A)_{\bbF_p}, U(A)_{\bbF_p})\simeq \bbF_p\,,
\end{equation}
this would allows us to construct an infinite sequence $f_1, f_2, \ldots$ of vectors in the $\bbF_p$-vector space $\Hom_{\NNum(k)_{\bbF_p}}(U(A)_{\bbF_p}, N\!\!M_{\bbF_p})$, with $f_i$ corresponding to the element $1 \in \bbF_p$ of \eqref{eq:endos}, such that $f_1, \ldots, f_r$ is linearly independent for every positive integer $r$. In other words, this would allows us to conclude that the $\bbF_p$-vector space $\Hom_{\NNum(k)_{\bbF_p}}(U(A)_{\bbF_p}, N\!\!M_{\bbF_p})$ is infinite dimensional. Recall from the proof of \cite[Prop.~1.4.1]{Brug} that the map $\bbZ \to \bbF_p$ gives rise to a surjective homomorphism 
\begin{equation}\label{eq:surjective}
\Hom_{\NNum(k)}(U(A), N\!\!M)\otimes_\bbZ \bbF_p \twoheadrightarrow \Hom_{\NNum(k)_{\bbF_p}}(U(A)_{\bbF_p}, N\!\!M_{\bbF_p})\,.
\end{equation}
Since, by assumption, the base field  $k$ is of characteristic zero, the abelian group $\Hom_{\NNum(k)}(U(A),N\!\!M)$ is finitely generated; see \cite[Thm.~1.2]{Separable}. Therefore, we conclude that the right-hand side of \eqref{eq:surjective} is a finite dimensional $\bbF_p$-vector space, which is a contradiction. This finishes the proof.
\end{proof}
The proof of Theorem \ref{thm:injective3} follows now from the following result:
\begin{theorem}\label{thm:injective33}
Let $k$ be a field of positive characteristic $p>0$ and $A, B$ two central simple $k$-algebras. If $p\mid\mathrm{ind}(A^\op \otimes B)$, then the images of $[A]$ and $[B]$ under the canonical map \eqref{eq:canonical2} are different. This holds in particular when $\mathrm{ind}(A)$ and $\mathrm{ind}(B)$ are coprime and $p$ divides $\mathrm{ind}(A)$ or $\mathrm{ind}(B)$.
\end{theorem}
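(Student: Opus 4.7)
The plan is to argue by contradiction and exploit the fact that, since $k$ has characteristic $p$, we can apply Theorem \ref{thm:semi} directly to deduce that $\NNum(k)_{\bbF_p}$ is abelian semi-simple. This bypasses the delicate cancellation argument used in the proof of Theorem \ref{thm:injective22} (Lemma \ref{lem:key}), which crucially relied on the finite generation of Hom groups in $\NNum(k)$ that is only available in characteristic zero.

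Concretely, suppose for contradiction that $[U(A)] = [U(B)]$ in $K_0(\NChow(k))$. Unfolding the definition of the Grothendieck ring of the symmetric monoidal additive category $\NChow(k)$, there exists $NM \in \NChow(k)$ such that
$$U(A) \oplus NM \simeq U(B) \oplus NM$$
in $\NChow(k)$. Applying the symmetric monoidal $\bbF_p$-linearization functor \eqref{eq:func3} and the quotient functor $\NChow(k)_{\bbF_p} \to \NNum(k)_{\bbF_p}$, one obtains an isomorphism
$$U(A)_{\bbF_p} \oplus NM_{\bbF_p} \simeq U(B)_{\bbF_p} \oplus NM_{\bbF_p}$$
in $\NNum(k)_{\bbF_p}$. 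Since $k$ is a field of characteristic $p$, we have $K_0(k)_{\bbF_p} \simeq \bbF_p$, so Theorem \ref{thm:semi} implies that $\NNum(k)_{\bbF_p}$ is abelian semi-simple and hence enjoys the cancellation property with respect to direct sums. Cancelling $NM_{\bbF_p}$ would therefore force $U(A)_{\bbF_p} \simeq U(B)_{\bbF_p}$ in $\NNum(k)_{\bbF_p}$. However, the hypothesis $p \mid \mathrm{ind}(A^\op \otimes B)$ combined with Proposition \ref{prop:index}(i) gives $\Hom_{\NNum(k)_{\bbF_p}}(U(A)_{\bbF_p}, U(B)_{\bbF_p}) = 0$; since $U(A)_{\bbF_p}$ is $\otimes$-invertible (and thus nonzero), this rules out any isomorphism $U(A)_{\bbF_p} \simeq U(B)_{\bbF_p}$, producing the desired contradiction.

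The final ``in particular'' clause requires only the footnoted identity $\mathrm{ind}(A^\op \otimes B) = \mathrm{ind}(A) \cdot \mathrm{ind}(B)$ when $\mathrm{ind}(A)$ and $\mathrm{ind}(B)$ are coprime, together with the trivial remark that if $p$ divides one of these indices then $p$ divides their product. I do not anticipate a serious obstacle: all the non-trivial ingredients (semi-simplicity of $\NNum(k)_{\bbF_p}$ via Theorem \ref{thm:semi}, the Hom vanishing of Proposition \ref{prop:index}(i), and the factorization of the canonical map \eqref{eq:canonical2} through $K_0(\NChow(k))$) are already established, and the essential conceptual point is simply that Theorem \ref{thm:semi} applies immediately in positive characteristic to the $\bbF_p$-linearization, in contrast to the characteristic zero situation which forced a more intricate reduction inside $\mathrm{CSA}(k)^\oplus_{\bbF_p}$.
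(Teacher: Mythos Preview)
Your proposal is correct and follows essentially the same approach as the paper: assume equality in $K_0(\NChow(k))$, pass to $\NNum(k)_{\bbF_p}$, invoke Theorem~\ref{thm:semi} to get semi-simplicity and hence cancellation, and derive a contradiction from Proposition~\ref{prop:index}(i). The only cosmetic difference is that the paper first records $U(A)_{\bbF_p}\not\simeq U(B)_{\bbF_p}$ in $\NChow(k)_{\bbF_p}$ and then transfers this to $\NNum(k)_{\bbF_p}$ via the analogue of implication~\eqref{eq:implication}, whereas you use the $\Hom$-vanishing clause of Proposition~\ref{prop:index}(i) directly in $\NNum(k)_{\bbF_p}$; both routes yield the same contradiction.
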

\begin{proof}
If $p\mid \mathrm{ind}(A^\op \otimes B)$, then Proposition \ref{prop:index}(i) implies that $U(A)_{\bbF_p}\not\simeq U(B)_{\bbF_p}$ in $\NChow(k)_{\bbF_p}$. Consequently, similarly to implication \eqref{eq:implication}, we conclude that $U(A)_{\bbF_p}\not\simeq U(B)_{\bbF_p}$ in $\NNum(k)_{\bbF_p}$. By definition, we have $[U(A)]=[U(B)]$ in the Grothendieck ring $K_0(\NChow(k))$ if and only if the following condition holds: 
\begin{equation}\label{eq:iso-key1}
\exists\, N\!\!M \in \NChow(k)\,\,\mathrm{such}\,\,\mathrm{that}\,\, U(A) \oplus N\!\!M \simeq U(B) \oplus N\!\!M\,.
\end{equation}
Thanks to Theorem \ref{thm:semi}, the category $\NNum(k)_{\bbF_p}$ is abelian semi-simple. Consequently, it satisfies the cancellation property with respect to direct sums. Therefore, if condition \eqref{eq:iso-key1} holds, one would conclude that $U(A)_{\bbF_p}\simeq U(B)_{\bbF_p}$ in $\NNum(k)_{\bbF_p}$, which is a contradiction. This finishes the proof. 
\end{proof}
\begin{corollary}\label{cor:main2}
When $k$ is a field of positive characteristic $p>0$, the restriction of the canonical map \eqref{eq:canonical2} to the $p$-primary torsion subgroup $\mathrm{Br}(k)\{p\}$ is injective. Moreover, the image of $\mathrm{Br}(k)\{p\}-0$ is disjoint from the image of $\bigoplus_{q \neq p}\mathrm{Br}(k)\{q\}$.
\end{corollary}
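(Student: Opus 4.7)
The plan is to deduce both assertions directly from Theorem \ref{thm:injective33}, exactly as Corollary \ref{cor:main} was deduced from the bare divisibility inequality. The only classical input needed is that the period and the index of a central simple $k$-algebra have the same prime factors.

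For the first claim, suppose $[A],[B]\in \mathrm{Br}(k)\{p\}$ satisfy $[A]\neq[B]$. I would consider $[A^\op \otimes B]=[B]-[A]$ as an element of $\mathrm{Br}(k)$: it lies in $\mathrm{Br}(k)\{p\}$ and is nonzero, so its period is a positive power of $p$. Invoking the equality of prime factors of the period and index then yields $p\mid \mathrm{ind}(A^\op\otimes B)$, and Theorem \ref{thm:injective33} immediately gives $[U(A)]\neq [U(B)]$ in $K_0(\NChow(k))$, i.e.\ the images under \eqref{eq:canonical2} are distinct.

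For the second claim, take $[A]\in \mathrm{Br}(k)\{p\}-0$ and $[B]\in \bigoplus_{q\neq p}\mathrm{Br}(k)\{q\}$. Decomposing $[A^\op\otimes B]=[B]-[A]$ along the primary decomposition of $\mathrm{Br}(k)$, its $p$-primary component is $-[A]\neq 0$, so the period of $A^\op\otimes B$ is divisible by $p$, and therefore so is its index. A second application of Theorem \ref{thm:injective33} gives $[U(A)]\neq[U(B)]$ in $K_0(\NChow(k))$, proving the disjointness.

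There is no real obstacle: once Theorems \ref{thm:injective33} (equivalently, Proposition \ref{prop:index}(i)) and the classical period-index prime-factor statement are on hand, this is a short piece of Brauer-group bookkeeping. The only very minor thing worth noting is that the proof is literally parallel to the proof of Corollary \ref{cor:main} given earlier, with $K_0^{(2)}(k)$ replaced throughout by $K_0(\NChow(k))$; so one could also argue formally by combining Corollary \ref{cor:main} with the factorization of \eqref{eq:canonical2} through $K_0^{(2)}(k)\simeq \cP\cT(k)$, but the direct deduction from Theorem \ref{thm:injective33} is the cleanest.
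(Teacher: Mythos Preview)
Your proposal is correct and matches the paper's approach: the paper states Corollary~\ref{cor:main2} without proof, implicitly relying on the identical argument already given for Corollary~\ref{cor:main} (with Theorem~\ref{thm:injective3} replaced by Theorem~\ref{thm:injective33}), which is exactly what you do. Your phrasing via the period of $A^\op\otimes B$ is a trivially equivalent variant of the paper's use of the divisibility $\mathrm{ind}(A^\op\otimes B)\mid \mathrm{ind}(A)\cdot\mathrm{ind}(B)$.
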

\begin{remark}\label{rk:extension-last}
As proved in \cite[Thm.~7.1]{NCArtin}, every ring homomorphism $k \to k'$ gives rise to the following commutative square
\begin{equation*}
\xymatrix{
\mathrm{dBr}(k) \ar[d]_-{-\otimes^{\bf L}_k k'} \ar[r]^-{\eqref{eq:canonical2}} & K_0(\NChow(k))\ar[d]^-{-\otimes^{\bf L}_k k'} \\
\mathrm{dBr}(k') \ar[r]_-{\eqref{eq:canonical2}} & K_0(\NChow(k'))\,.
}
\end{equation*}
Therefore, by combining it with Theorems \ref{thm:injective22} and \ref{thm:injective33}, we conclude that Corollary \ref{cor:reduction} also holds with \eqref{eq:canonical} replaced by \eqref{eq:canonical2}.
\end{remark}
\medbreak\noindent\textbf{Acknowledgments:}
The author is very grateful to Ofer Gabber, Maxim Kontsevich, and Michel Van den Bergh for useful discussions. He also would like to thank the Institut des Hautes {\'E}tudes Scientifiques (IH{\'E}S) for its hospitality, excellent working conditions, and stimulating atmosphere. 

\end{document}

\end{proof}